\crefname{hypothesis}{Hypothesis}{Hypotheses}
\title{Lyapunov Function PDEs Method to the Stability of Some Chemical Reaction Networks \thanks{This work will be partly presented in 21st IFAC World Congress in Berlin, Germany, July 12-17, 2020. Submitted to the editors. 
\funding{This work was funded by the National Nature Science Foundation of China under Grant
No. 11671418, and the Zhejiang Provincial Natural Science Foundation of China under Grant No.
LZ20A010002.
}}}
\author{Yafei Lu\thanks{School of Mathematical Sciences, Zhejiang University (\email{11535029@zju.edu.cn (Y. Lu), gaochou@zju.edu.cn (C. Gao, Correspondence)).}}
\and Chuanhou Gao\footnotemark[2]}
\def\dd{\text{d}}
\newtheorem{Property}{Property}
\newtheorem{example}{example}
\begin{document}

\maketitle
 \begin{abstract}
 This paper contributes to extending the validity of Lyapunov function PDEs (invented by Fang and Gao in [\textit{SIAM Journal on Applied Dynamical Systems}, 18(2019), pp. 1163-1199] and whose solution is conjectured to be able to behave as a Lyapunov function) in stability analysis to more mass-action chemical reaction networks. By defining a new class of networks, called complex balanced produced networks, we have proved that the Lyapunov function PDEs method is valid in capturing the asymptotic stability of this class of networks, and also to their compound with any 1-dimensional independent network according to species and with any two-species autocatalytic non-independent network if some moderate conditions are included. A notable point is that these three classes of networks are non-weakly reversible, any dimensional and of any deficiency. We apply our results to some practical biochemical reaction networks including birth-death processes, motifs related networks etc., to illustrate validity.

\end{abstract}

\begin{keywords}
chemical reaction networks, complex balanced produced networks, mass-action systems, Lyapunov function PDEs, asymptotic stability
\end{keywords}

\begin{AMS}
  34D20, 80A30, 93C15, 93D20
\end{AMS}

\section{Introduction}
It has been extensively recognized that chemical reaction networks (CRNs) appear in chemistry, biology and process industries. The study of CRNs, namely, CRN theory, originating from the well-known literature \cite{Feinberg1972Complex,HornJackson1972General}, aims at exploring the correlation between the dynamical properties and structural features of networks. In particular,
as the emergence of the discipline--systems biology, CRN theory has received considerable critical attention \cite{Sontag2012Structure} once again, as a powerful tool to analyze and explain the underlying dynamical behaviors of chemical and biochemical networks from the mathematical point of view. Starting from this practical point, we are encouraged to develop CRN theory, especially the research on dynamical characteristics, which may be stability, oscillation, persistence, etc, as to serve the real world better.
As results for guaranteeing stability of steady states of mass action system are few and far between (and well-earned, when they are discovered), the topic is of broad interest. In this paper, we mainly pour our attention to the stability analysis of equilibria, with the help of constructing suitable Lyapunov functions.

Regarding the stability property there have been plentiful findings \cite{Angeli2009A,Craciun2006Multiple,Feinberg1972Complex,Feinberg1995The,Finberg1987Chemical,Finberg1988Chemical,Szederke2011Finding} concentrated on certain CRNs with special structures, such as detailed balancing, complex balancing, among which the zero deficiency theorem \cite{HornJackson1972General} probably is the best known. It has been argued that following mass-action kinetics, a class of CRNs equipped with zero deficiency and weakly reversible structure is complex balanced regardless of specific parameter values, and possess only one equilibrium in every positive stoichiometric compatibility class. More importantly,  it has also pointed out that each of these equilibria is asymptotically stable by taking the pseudo-Helmholtz free energy function as the Lyapunov function. Later this work has been improved by \cite{Finberg1988Chemical}, known as one deficiency theorem, which reported the existence and uniqueness of equilibria for a complex balanced MAS within restrictions on deficiency (not necessarily zero).  Furthermore, based on these results, global asymptotic stability \cite{Siegel2000Global} of equilibria for complex balanced MASs can be achieved when gratifying persistent condition \cite{Angeli2007A,Craciun2013Persistence,Pantea2012On}. Besides, some works \cite{Necessary1989,rao2013graph} focused on the detailed balanced MAS with reversible structure, which is a special case of a complex balanced MAS. Particularly, van der Schaft and his coauthors \cite{rao2013graph} proposed a compact formulation to depict the network dynamics by using graph theory for this class of MASs, and the stability properties were also achieved.

For the sake of stability analysis of MASs, one widely accepted approach is to look for proper Lyapunov functions according to their structural properties.
Study of \cite{Johnston2011Linear,Johnston2012Dynamical} has shown that when the considered MAS can be mapped into a complex balanced MAS through linear conjugacy method, it resulted in the same stability for both systems. Alradhawi and Angeli \cite{Alradhawi2016New} established piecewise linear in rates Lyapunov functions for some balanced MASs, and further, they suggested the asymptotic stability property if LaSalle's condition was met. Another Lyapunov function candidate coming from \cite{Ke2019Complex}, called generalized pseudo-Helmholtz function, served to establish
asymptotic stability for a general balanced MAS which is defined on the notions of reconstructions and reverse reconstructions.

Distinguished with the above results, several attempts have been made to address the stability problem from the micro angle.
Anderson \cite{Anderson2015Lyapunov} put forward
the scaling limits of nonequilibrium potential as the Lyapunov function for birth-death systems. Fang and Gao \cite{Fang2015Lyapunov} developed partial differential equations (PDEs) from the chemical master equation for general MASs, termed as Lyapunov function PDEs, whose solutions are potential to become Lyapunov functions under required conditions. This systematic approach has been confirmed well on multiple CRNs, including complex balanced CRNs, CRNs of 1-dimensional stoichiometric
subspace and a few special cases of higher dimensional
stoichiometric subspace.

Nevertheless, those CRNs with more general structures remain an arduous obstacle. In the meanwhile, a bold conjecture was proposed in \cite{Fang2015Lyapunov}, it said that for any MAS admitting a stable positive equilibrium,
Lyapunov function PDEs can produce Lyapunov functions to render the system locally asymptotically stable at the equilibrium when choosing a proper boundary complex set.
Recent evidence \cite{Wu2020A} takes a small further step for this guess, which indicated the PDEs are also valid for a class of complex balanced produced (CBP) CRNs with non-weakly reversible structure.

Motivated by these exciting results, our paper extends the validity of Lyapunov function PDEs in asymptotic stability analysis to three kinds of networks with high-dimension, arbitrary deficiency as well as non-weakly reversible structure. Meanwhile, our work gives a great support for the conjecture again. Following the CBP CRNs proposed in \cite{Wu2020A}, an algorithm is proposed to demonstrate how to generate CBP CRNs from a complex balanced CRN. Further, we define a class of CRNs composed of a CBP CRN and some $1$-dimensional independent networks, referred to as CBP-$\ell$Sub1 CRNs. Inspired by the research on autocatalytic reactions \cite{hoessly2019stationary},  a type of meaningful CRNs which plays a vital role in the processes of life \cite{Kauffman1995At,Hordijk2004}, such as biological metabolism, the initial transcripts of rRNA, etc., we define a CRN compounded of a CBP CRN and any two-species autocatalytic CRNs, named as CBP-$\ell$ts-Autoca CRNs. For each kinds of the MASs mentioned above, we succeed in capturing their asymptotic stability properties by using Lyapunov function PDEs approach.  Moreover, it has been proved that a CBP MAS / CBP-$\ell$ts-Autoca MAS admits a unique/ at most one positive equilibrium in each positive stoichiometric compatibility class. Besides, a dimensionality reduction strategy has been proposed, which aims at obtain the stability property for a network by decomposing it into a CBP CRN and several $1$-dimensional CRNs.

The remainder of this paper proceeds as follows. \cref{sec2} reviews the relevant notations and conclusions concerning chemical reaction networks and Lyapunov function PDEs, as well as a crucial conjecture about PDEs. \cref{sec3} summarizes that each positive stoichiometric compatibility class induced by a CBP MAS contains a unique positive equilibrium and this equilibrium is asymptotically stable, and moreover, the global asymptotic stability can be reached if given persistent condition. Meanwhile, an algorithm is proposed to compute CBP MASs.
In \cref{sec4}, the number of positive equilibria in each positive stoichiometric compatibility class for a CBP-$\ell$Sub1 MAS is discussed at first. Then follows the asymptotic stability of this MAS by using Lyapunov function PDEs method, and two examples are well studied to illustrate these results. Furthermore, we put forward a dimensionality reduction method for decomposing a CRN into a $1$-dimensional MAS and a CBP MAS to achieve stability. Finally, \cref{sec6} concludes the paper.

\noindent{\textbf{Mathematical Notations:}}\\
\rule[1ex]{\columnwidth}{0.8pt}
\begin{description}
   \item[\hspace{-0.5em}{$\mathbb{R}^n, \mathbb{R}^n_{\geq 0}, \mathbb{R}^n_{>0}$}]:
	$n$-dimensional real space, non-negative real space, positive real space, respectively.
	\item[\hspace{-0.5em}{$\mathbb{Z}^n_{\geq 0}$}]: $n$-dimensional non-negative integer space.
	\item[\hspace{-0.5em}{$x^{v_{\cdot i}}$}]: $x^{v_{\cdot i}}=\prod_{j=1}^{d}x_{j}^{v_{ji}}$, where $x\in \mathbb{R}^{d}, v_{\cdot i}\in\mathbb{Z}^{d}$ and $0^{0}=1$.
	\item[\hspace{-0.5em}{$\frac{x}{y}$}]: $\frac{x}{y}=(\frac{x_1}{y_1}, \cdots, \frac{x_n}{y_n}) $, where $x\in\mathbb{R}^{n}$, $y\in\mathbb{R}^{n}_{>0}$.
	\item[\hspace{-0.5em}{$\mathrm{Ln}(x)$}]: $\mathrm{Ln}(x)=\left(\ln{x_{1}}, \cdots, \ln{{x}_{n}} \right)^{\top}$, where $x\in\mathbb{R}^{n}_{>{0}}$.
	\item[\hspace{-0.5em}{$\mathscr{C}^2(\cdot~ ; *)$}]:
	 the set of $i$th continuous differentiable functions from "$\cdot$" to "*".
	 \item[\hspace{-0.5em}{s.t.}]: such that.
\end{description}
\rule[1ex]{\columnwidth}{0.8pt}

\section{Preliminaries}\label{sec2}
In this section, we will provide a basic conceptual framework of CRNs and Lyapunov function PDEs for the understanding of subsequent results.

\subsection{Chemical reaction networks}
Consider a network involved with $n$ species $S_1, \cdots, S_n$ and $r$ chemical reactions. The $i$th $(i=1,\cdots,r)$ reaction is written as
\begin{equation*}
\sum^{n}_{j=1}v_{ji}S_j \rightarrow \sum^{n}_{j=1}v'_{ji}S_j,
\end{equation*}
where $v_{ji}, ~v'_{ji}\in\mathbb{Z}_{\geq 0}$ represent the complexes of reactant and resultant, respectively.
Following with \cite{Feinberg1995The}, here come some elementary definitions in regard to CRNs.
\begin{definition} \emph{(CRN)}. A CRN consists of three finite sets:
\begin{enumerate}
\item{a set of species $\mathcal{S}=\{S_1, \cdots, S_n\}$;}
\item{a set of complexes $\mathcal{C}=\bigcup^r_{i=1}\{v_{\cdot i},v'_{\cdot i}\}$ with $\text{Card}(\mathcal{C})=c$, and the $j$th entry of $v_{\cdot i}$ represents the stoichiometric coefficient of $S_j$ in this complex;}
\item{a set of reactions $\mathcal{R}=\{v_{\cdot 1}\rightarrow v'_{\cdot 1},\cdots,v_{\cdot r}\rightarrow v'_{\cdot r}\}$, which satisfies that $\forall~ v_{\cdot i} \in \mathcal{C}, v_{\cdot i}\rightarrow v_{\cdot i}\notin \mathcal{R}$ but $\exists~ v'_{\cdot i}$, s.t. $v_{\cdot i}\rightarrow v'_{\cdot i} \in\mathcal{R}$ or $v'_{\cdot i}\rightarrow v_{\cdot i}\in\mathcal{R}$.}
\end{enumerate}
The triple $(\mathcal{S,C,R})$ is often used to represent a CRN.
\end{definition}

\begin{definition}\emph{(stoichiometric subspace)}.
For a CRN $(\mathcal{S,C,R})$, the linear subspace $\mathscr{S}\triangleq \emph{span}\{v'_{\cdot 1}-v_{\cdot 1},\cdots,v'_{\cdot r}-v_{\cdot r}\}$ is called the stoichiometric subspace of this network, and $\emph{dim}\mathscr{S}$ represents the dimension of $\mathscr{S}$.
\end{definition}

\begin{definition}\emph{(stoichiometric compatibility class).}
 Let $\mathscr{S}$ be the stoichiometric subspace of a  CRN $(\mathcal{S,C,R})$ and $x_0\in\mathbb{R}_{\geq0}^{n}$, then the sets $\mathscr{S}(x_0)\triangleq\{x_0+\xi\mid\xi\in\mathscr{S}\}$, $\bar{\mathscr{S}}^+(x_0)\triangleq\mathscr{S}(x_0)\bigcap \mathbbold{R}^n_{\geq 0}$ and $\mathscr{S}^+(x_0) \triangleq\mathscr{S}(x_0)\bigcap \mathbbold{R}^n_{>0}$ are called the stoichiometric compatibility class, nonnegative and positive stoichiometric compatibility class of $x_0$, respectively.
\end{definition}

When a CRN takes with the mass-action kinetics, the reaction rate of the $i$th reaction $v_{\cdot i}\rightarrow v'_{\cdot i}$ is evaluated by
$$R_i(x)\triangleq k_{i} x^{v_{\cdot i}}=\prod_{j=1}^{d}x_{j}^{v_{ji}}$$
with $k_i \in \mathbb{R}_{>0}$, $x\in \mathbb{R}^n_{\geq 0}$ representing the rate constant of this reaction and the vector of concentrations $x_i$ of the substance $\mathcal{S}_i$.

\begin{definition}\emph{(mass-action system).}
A CRN $(\mathcal{S,C,R})$ assigned mass-action kinetics is said to be a mass-action system (MAS), often represented by the quadruple $\mathcal{M}\triangleq(\mathcal{S,C,R,K})$.
\end{definition}

The dynamics of an $\mathcal{M}$ that depicts the evolution of concentrations of the species over time is presented as
\begin{equation}\label{eq:mas}
\frac{\mathrm{d}x}{\mathrm{d}t}=\Gamma R(x),~~~x\in \mathbb{R}_{\geq0}^{n} ,
\end{equation}
where $\Gamma\in\mathbb{Z}_{n\times r}$ is the stoichiometric matrix with the $i$th column given by $\Gamma _{\cdot i}=v'_{\cdot i}-v_{\cdot i}$ called the reaction vector, and $R(x)$ is the vector function of reaction rate defined in $\mathbb{R}^{r}_{\geq 0}$ with each element $R_{i}(x)=k_{i}x^{v_{\cdot i}}$.

\begin{definition}\emph{(balanced MAS).}\label{equilibrium}
 A point $x^{*}\in\mathbb{R}_{>0}^{n}$ is said to be a positive equilibrium in $\mathcal{M}$ if it satisfies $\Gamma R(x^{*})=0$. A MAS that possesses a positive equilibrium is a balanced MAS.
\end{definition}

\begin{definition}\emph{(complex balanced MAS).}
For an $\mathcal{M}$, if $\exists x^*\in\mathbb{R}^n_{>0}$, s.t.
\begin{align}
\sum_{\{i\mid v_{\cdot i}=z\}}k_{i} (x^{*})^{v_{\cdot i}}=
\sum_{\{i\mid v'_{\cdot i}=z\}}k_{i}(x^{*})^{v_{\cdot i}}, \qquad \forall z\in \mathcal{C},
\end{align}
which means the consuming rate equals
the producing rate at this state for any complex,
then $x^*$ is called a complex balanced equilibrium, and this $\mathcal{M}$ is called a complex balanced MAS.
\end{definition}

\begin{definition}\emph{(reaction vector balanced MAS \cite{Cappelletti2018Graphically}).}
For an $\mathcal{M}$, if $\exists x^*\in\mathbb{R}^n_{>0}$, s.t.
\begin{align}
\sum_{\{i \mid v'_{\cdot i}-v_{\cdot i}=\eta\}}k_{i} (x^{*})^{v_{\cdot i}}=
\sum_{\{i\mid v'_{\cdot i}-v_{\cdot i}=-\eta\}}k_{i}(x^{*})^{v_{\cdot i}}, \qquad \forall \eta\in \mathbb{R}^n
\end{align}
then $x^*$ is a reaction vector balanced equilibrium in $\mathcal{M}$, and the MAS is a reaction vector balanced MAS.
\end{definition}

We use the following example to illustrate the reaction vector balanced equilibrium.

\begin{example}
A MAS takes the reaction route like
\begin{align*}
\xymatrix{S_2 \ar[r]^-{ k_1} & S_1,~ S_1+S_2\ar[r]^-{k_2}&2S_2,~
2S_1 \ar[r]^-{ k_3} & 2S_2,~
3S_2\ar[r]^-{ k_4}&2S_1+S_2.}
\end{align*}
There are four reaction vectors $(1,-1)^\top,~
(-1,1)^\top,~(-2,2)^\top,~(2,-2)^\top$ in the network. We only need to consider two of them, i.e., $\eta=(-1,1)^\top$ and $\eta=(-2,2)^\top$, respectively. If a positive concentration vector $x^*=(x_1^*,x_2^*)^{\top}$ is a reaction vector equilibrium, then for $\eta=(-1,1)^\top$ it should satisfy $k_2 x^*_1 x^*_2=k_1 x^*_2$, i.e., $x^*_1=\frac{k_1}{k_2}$, while for $\eta=(-2,2)^\top$ there should be $k_3 x^{*^2}_1=k_4x^{*^3}_2$, i.e., $x^*_2=\sqrt[3]{\frac{k_3}{k_4}\frac{k_1^2}{k_2^2}}$. \end{example}

\subsection{Lyapunov function PDEs}\label{sec2.2}For any balanced $\mathcal{M}$, Fang and Gao \cite{Fang2015Lyapunov} invented the Lyapunov function PDEs to the stability of $\mathcal{M}$, whose concrete form are
\begin{equation}\label{eq:LyapunovPDE}
\sum^{r}_{i=1}k_{i}x^{v_{\cdot i}}-\sum^{r}_{i=1}k_{i}x^{v_{\cdot i}}\exp\big\{ (v'_{\cdot i}-v_{\cdot i})^{\top}\nabla f(x)\big\}=0,
\end{equation}
where $x\in\mathbb{R}_{>0}^{n}$, accompanied with a boundary condition,
\begin{equation}\label{eq:boundary}
\lim_{x\rightarrow \bar{x}
	\atop
{x}\in (\bar{x}+\mathscr{S})\cap\mathbb{R}_{>0}^{n} }
\sum_{\{i\mid v_{\cdot i}\in\mathcal{C}_{\bar{x}}\}}k_{i}x^{v_{\cdot i}}-\sum_{\{i\mid v'_{\cdot i}\in\mathcal{C}_{\bar{x}}\}}k_{i}x^{v_{\cdot i}} \exp\big\{(v'_{\cdot i}-v_{\cdot i})^{\top}\nabla f(x)\big\}=0,
\end{equation}
where $\mathcal{C}_{\bar x}$ stands for the complex set induced by any boundary point $\bar{x}\in\partial\mathbb{R}_{\geq 0}^{n}$.
One simple alternative is the naive boundary complex set (see details in \cite{Fang2015Lyapunov}), defined as
\begin{align*}\label{eq:naivebd}
\bar{\mathcal{C}}_{\bar{x}}=\{z\in \mathcal{C} \mid \exists ~\epsilon >0, ~\text{such that}~\forall j=1,\cdots,n, \bar{x}_j\geq\epsilon z_j \}.
\end{align*}

The solutions of the Lyapunov function PDEs exhibit good properties in characterizing the dynamical behaviors of the corresponding MAS.

\begin{Property}\emph{(\cite{Fang2015Lyapunov})}
Given a balanced $\mathcal{M}$, assume there exists a solution $f(x)$ defined in $\mathscr{C}^1(\mathbb{R}^n_{>0};\mathbb{R})$ for the Lyapunov function PDE \cref{eq:LyapunovPDE} with proper boundary condition induced by $\mathcal{M}$, then $f(x)$ possesses the following two properties:
\begin{enumerate}
\item {$f(x)$ is dissipative, that is $\dot {f}(x)=\frac{\dd f(x)}{\dd t}\leq 0$ with the equality holding if and only if $\nabla f(x) \bot \mathscr{S}$;}
\item{if $f(x)$ is defined in $\mathscr{C}^2(\mathbb{R}^n_{>0};\mathbb{R})$, and $\exists\mathcal{D} \subset \mathbb{R}^n_{>0}$, s.t. $\forall x\in\mathcal{D}$ and $\forall \mu \in \mathscr{S}$, there is
\begin{align}\label{property2}
\mu^\top \nabla^2 f(x)\mu \geq 0,
\end{align}
where the equality holds if and only if $\mu=\mathbbold{0}_{n}$, then $\forall x\in \mathcal{D}$, $\dot {f}(x)=0$ if and only if  $x$ is an equilibrium in $\mathcal{M}$.}
\end{enumerate}
\end{Property}

A sufficient condition is then given to reach the asymptotic stability of MASs based on the solution of the Lyapunov function PDEs.

\begin{theorem}\emph{(\cite{Fang2015Lyapunov})}\label{asymstability}
Given an $\mathcal{M}$ with an equilibrium $x^* \in \mathbb{R}^{n}_{>0}$, assume that its Lyapunov function PDE \cref{eq:LyapunovPDE} admits a solution $f\in \mathscr{C}^2(\mathbb{R}^n_{>0};\mathbb{R})$, and there exits a region near $x^*$ such that \cref{property2} holds throughout this region. Then for any initial condition in this region but with an initial energy lower than that at any boundary point included in the region, the solution $f(x)$ is an available Lyapunov function to establish the locally asymptotic stability of $x^*$.
\end{theorem}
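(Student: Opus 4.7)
The plan is to verify the three standard Lyapunov ingredients on the affine slice $(x^*+\mathscr{S})\cap\mathbb{R}^n_{>0}$: (i) that $f$ attains a strict local minimum at $x^*$ when restricted to this slice; (ii) that $\dot f(x)\le 0$ along trajectories of \cref{eq:mas}; (iii) that $\dot f(x)=0$ forces $x=x^*$ inside a suitable neighborhood. Items (ii) and (iii) are already furnished by the two assertions of Property 1 once the Hessian condition \cref{property2} is in force, so the technical content lies in (i) together with a sublevel-set argument that keeps the trajectory trapped near $x^*$.

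First I would read off what \cref{property2} gives: throughout the region $\mathcal{D}$ on which it is assumed to hold, $\nabla^2 f(x)$ is strictly positive definite on the linear subspace $\mathscr{S}$, so $f$ restricted to every affine translate of $\mathscr{S}$ is strictly convex inside $\mathcal{D}$. Because $x^*$ is an equilibrium we have $\dot f(x^*)=0$, and the equality case of the first assertion of Property 1 forces $\nabla f(x^*)\perp\mathscr{S}$. Hence $x^*$ is a critical point of $f|_{(x^*+\mathscr{S})\cap\mathcal{D}}$, and strict convexity upgrades it to the unique strict local minimum on that slice inside $\mathcal{D}$.

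Next I would use the initial-energy hypothesis to produce a positively invariant neighborhood of $x^*$. Let $c^*$ denote the infimum of $f$ taken over the boundary of $\mathcal{D}$ intersected with the slice $x^*+\mathscr{S}$. Any initial condition $x_0\in\mathscr{S}^+(x^*)\cap\mathcal{D}$ with $f(x_0)<c^*$ lies in the sublevel set $L=\{x\in(x^*+\mathscr{S})\cap\mathcal{D}: f(x)\le f(x_0)\}$, and strict convexity on the slice together with $f(x_0)<c^*$ prevents $L$ from touching the boundary of $\mathcal{D}$, so $L$ is compactly contained in $\mathcal{D}\cap\mathbb{R}^n_{>0}$. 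Since trajectories preserve the stoichiometric compatibility class through $x^*$ and $\dot f\le 0$ by the first assertion of Property 1, the forward trajectory starting at $x_0$ is trapped in $L$ for all $t\ge 0$; this already yields local Lyapunov stability of $x^*$.

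Finally I would invoke LaSalle's invariance principle on the compact, positively invariant set $L$. Its $\omega$-limit set is contained in the largest invariant subset of $\{x\in L:\dot f(x)=0\}$, and the second assertion of Property 1 identifies this with the equilibria sitting inside $L$; the critical-point analysis above shows that $x^*$ is the only such point, so the $\omega$-limit set collapses to $\{x^*\}$. The main obstacle is the bookkeeping in the third step: one has to take the sublevel sets inside the affine slice $x^*+\mathscr{S}$ rather than inside the ambient $\mathbb{R}^n$, and verify that the resulting compact set stays strictly interior to $\mathbb{R}^n_{>0}$. The "initial energy lower than any boundary energy" hypothesis is precisely tailored to this, but formulating it relative to the slice (so that positive definiteness on $\mathscr{S}$, rather than on all of $\mathbb{R}^n$, is enough) is where the argument requires care.
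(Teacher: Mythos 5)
The paper does not actually prove this theorem: it is imported verbatim from \cite{Fang2015Lyapunov}, so there is no in-paper proof to compare your attempt against. Your argument is the standard Lyapunov/LaSalle route that the cited source follows --- $x^*$ is a strict local minimum of $f$ on the slice $(x^*+\mathscr{S})\cap\mathcal{D}$ because $\nabla f(x^*)\perp\mathscr{S}$ (equality case of Property~1) plus strict convexity from \cref{property2}; the boundary-energy hypothesis traps the trajectory in a compact sublevel set of the slice; and the invariance principle collapses the $\omega$-limit set onto the unique equilibrium in that set --- and it is correct. The only slip worth noting is a misattribution in the third step: it is the hypothesis $f(x_0)<c^*$ (with $c^*$ the infimum of $f$ over $\partial\mathcal{D}$ on the slice), not strict convexity, that keeps the sublevel set away from $\partial\mathcal{D}$; strict convexity is what you need afterwards, to guarantee that $x^*$ is the \emph{only} critical point, hence the only equilibrium, in that component of the sublevel set.
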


It is thus conjectured \cite{Fang2015Lyapunov} that, \emph{``for any MAS that admits a stable positive equilibrium, if the boundary complex set is equipped properly, then the Lyapunov function PDEs induced by this system have a solution qualified as a Lyapunov function to suggest the system is locally asymptotically stable at the equilibrium"}.

The conjecture has been proved true in the cases of the following three classes of MASs.

(1) complex balanced MASs whose PDEs admit the well-known pseudo-Helmholtz free energy function
\begin{align}\label{eq:Helmholtz}
G(x)=\sum^{n}_{j=1}\left(x^*_j-x_j-x_j\ln{\frac{x^*_j}{x_j}}\right),~~ x\in\mathbb{R}^n_{>0}
\end{align}
to be a solution that can act as a Lyapunov function.

(2) all MASs of $\text{dim}\mathscr{S}=1$ whose PDEs have a solution in the form of
\begin{align}\label{sub1Lya}
f(x)=\int^{\gamma(x)}_0 \ln {u}(y^{\dag}(x)+\alpha \omega)\dd \alpha
\end{align}
as an available Lyapunov function, where the definitions of $\gamma(x),y^{\dag}(x),u$, and $\omega$ are given in \cref{thm:sub1}.

(3) Com-$\ell$Sub1 MASs with $\text{dim}\mathscr{S}\geq 2$ composed of a complex balanced MAS (${\mathcal{S}}^{(0)},{\mathcal{C}}^{(0)}, {\mathcal{R}}^{(0)},{\mathcal{K}}^{(0)}$) and some $1$-dimensional MASs $({\mathcal{S}}^{(p)},{\mathcal{C}}^{(p)}, {\mathcal{R}}^{(p)},{\mathcal{K}}^{(p)})$, where $p=1,\cdots,\ell$ and all subnetworks are supposed to be mutually independent according to the species. The Lyapunov function PDEs admit a solution
$$F(x)=G(x^{(0)})+\sum^{\ell}_{p=1} f(x^{(p)}),$$
where $G(x^{(0)})$ and every $f(x^{(p)})$ are defined by \cref{eq:Helmholtz} and \cref{sub1Lya}, respectively. Clearly, $F(x)$ is a suitable Lyapunov function.

In the current work, we continue to exhibit the validity of the Lyapunov function PDEs to more CRNs. Based on the above known solutions, we try to construct more solutions as well as CRNs with special structures to validate the conjecture. For simplicity, we are only concerned with the Lyapunov function PDE \cref{eq:LyapunovPDE} regardless of its boundary condition in the subsequent study. It is naturally not difficult to prove that the solution of the PDE \cref{eq:LyapunovPDE} also supports the corresponding boundary condition.

%


\section{Stability of CBP MASs}\label{sec3}
In this section, we will define a new class of CRNs based on complex balanced ones, and demonstrate some nice results on stability for them using the Lyapunov function PDEs method. 


\subsection{Definition}
The concept of reverse reconstruction \cite{Ke2019Complex} for a MAS stimulates us to define a wide range of CRNs, which essentially originate from complex balanced MASs. We thus name them CBP CRNs \cite{Wu2020A}.

\begin{definition}\label{df:CBP-CRNs}\emph{(CBP MAS).}	
Given a complex balanced $\mathcal{M}$ governed by \cref{eq:mas} with an equilibrium $x^* \in \mathbb{R}^{n}_{>0}$, an $\tilde{\mathcal{M}}=(\tilde{\mathcal{S}},\tilde{\mathcal{C}},\tilde{\mathcal{R}},\tilde{\mathcal{K}})$ is called a CBP MAS with respect to $\mathcal{M}$ if for some positive diagonal matrix $D=\emph{diag}(d_1,~\cdots, ~d_n)$ but not the identity matrix, its species set admits $\tilde{\mathcal{S}}=\mathcal{S}$ while the complexes set $\tilde{\mathcal{C}}=\cup^{\tilde{r}}_{i=1}\{\tilde{v}_{\cdot i}, ~\tilde{v}'_{\cdot i}\}$
and the reactions set $\tilde{\mathcal{R}}=\cup^{\tilde{r}}_{i=1}\{\tilde{v}_{\cdot i}\stackrel{\tilde{k}_i}\longrightarrow\tilde{v}'_{\cdot i}\}$ satisfy

\emph{(1)} {$\tilde{r}=r,\tilde{v}_{\cdot i},\tilde{v}'_{\cdot i}\in\mathbb{Z}^n_{\geq 0},\tilde{v}_{\cdot i}=v_{\cdot i}, \tilde{v}'_{\cdot i}=v_{\cdot i}+D^{-1}(v'_{\cdot i}-v_{\cdot i})$;}

\emph{(2)} {$\tilde{k}_i =k_{i}\prod^{n}_{j=1}d^{v_{ji}}_{j},
\tilde{R}(\tilde{x})=R(x)$.}
\end{definition}

Further, the dynamics of $\tilde{\mathcal{M}}$ is expressed by
\begin{equation}\label{CBP-CRNs}
\dot{\tilde{x}}=\tilde{\Gamma}\tilde{R}(\tilde{x}),
~~~\tilde{x}\in \mathbb{R}_{\geq0}^{n}.
\end{equation}

\begin{remark}\label{re:CBP1}
\cref{df:CBP-CRNs} suggests that $\tilde{\Gamma}=D^{-1}\Gamma$ and $\tilde{x}=D^{-1}x$, from the latter, i.e., $\tilde{x}^{*}=D^{-1}x^{*}$. Moreover, $\tilde{x}^{*}$ is an equilibrium in $\tilde{\mathcal{M}}$ if and only if $x^{*}$ is an equilibrium in $\mathcal{M}$.
\end{remark}

The one-to-one correspondence between $\tilde{x}^{*}$ and $x^{*}$ manifests a momentous property for CBP MASs, as shown below.

\begin{Property}\label{pro:CBP}\emph{(\cite{Wu2020A})}
For any CBP $\tilde{\mathcal{M}}$ generated by a complex balanced  $\mathcal{M}$ under a certain matrix $D$, there is a unique equilibrium in each positive stoichiometric compatibility class.
\end{Property}

\begin{remark}\label{rm:gao}
Substantially, the concept of CBP MAS can be explained by defining a linear transformation, i.e., $\tilde{x}=D^{-1}x$, to bridge the differential equation $\dot{x}=\Gamma R(x)$ to $\dot{\tilde{x}}=\tilde{\Gamma}\tilde{R}(\tilde{x})$. At this point, this notion is consistent with the linear conjugacy concept \cite{Johnston2011Linear,Johnston2012Dynamical}, and a special case of the reconstruction concept \cite{Ke2019Complex}. However, there is a large difference between CBP networks and other twos. The former could stand for practical biochemical networks while the latter twos work as tools and/or even virtual networks.
\end{remark}

We use the following example to exhibit that CBP networks are of practical significance.
\begin{example}\label{eg:CBP}
Consider a class of complex balanced MASs like
\begin{align}
\xymatrix{m'S_1 \ar @{ -^{>}}^{k_1}  @< 1pt> [r]& mS_2, \ar  @{ -^{>}}^{k_2}  @< 1pt> [l], & m',m \in \mathbb{Z}_{>0}.}
\end{align}
By taking $D=\emph{diag}(m',m)$ we get the CBP MAS in the form of
\begin{align}\label{motif:K}
\xymatrix{m'S_1 \ar[r]^-{k_1m'^{m'}}  & (m'-1)S_1+S_2, & mS_2 \ar[r]^-{k_2 m^m} &(m-1)S_2+S_1.}
\end{align}
Actually, this CBP network can correspond to two types of motifs which have been well studied in \cite{NenMotif}. These motifs may be helpful in looking for candidates of biochemical reactions with a small-number
effect for possible biological functions. More precisely,
when $m'\geq 2, m\geq 2$, \cref{motif:K} belongs to motif K and
when $m'=1$ and $m\geq2$, the shape of \cref{motif:K} coincides with motif G.
\end{example}

\subsection{An algorithm for producing CBP CRNs}
\cref{df:CBP-CRNs} will yield a large class of non-weakly reversible CRNs based on a single complex balanced CRN under various matrices $D$'s. The following algorithm gives a systematic way to generate CBP CRNs from a complex balanced CRN.

\begin{algorithm}
	\caption{find all feasible $D=\text{diag}(d_1, \cdots,d_n)$ such that all vectors  $\tilde{v}'_{\cdot i}\in \mathbb{Z}^n_{\geq0}$ for
		$i=1,\cdots,r$, and generate $\tilde{v}'_{\cdot i}$, $\tilde{k}_i$.}
	\label{alg}
	\begin{algorithmic}[1]
		\STATE Input: $v_{\cdot i},v'_{\cdot i},k_i$, $i=1,\cdots,r$
		\FOR {$j=1$ to $n$}{
			\FOR {$i=1$ to $r$}{
				\IF{$v'_{ji}-v_{ji}<0$}
				\STATE $F_{ji}=\left\{\frac{v_{ji}-v'_{ji}}{v_{ji}-a},a=0,\cdots,v_{ji-1}\right\}$
				\ELSE
				\STATE $F_{ji}=\left\{\frac{v_{ji}-v'_{ji}}{v_{ji}-a},a=1,\cdots\right\}$
				\ENDIF}
				\ENDFOR}
			\STATE $F_j=\bigcap F_{ji}$
			\ENDFOR
		\STATE $D=\text{diag}(d_1,\cdots,d_n)$, $d_j\in F_j$.\
		\STATE $\tilde{v}'_{\cdot i}=v_{\cdot i}+D^{-1}(v'_{\cdot i}-v_{\cdot i})$, $\tilde{k}_i =k_{i}\prod^{n}_{j=1}d^{v_{ji}}_{j}$.\
		\STATE Output $\tilde{v}'_{\cdot i}, \tilde{k}_i,D$.
	\end{algorithmic}
\end{algorithm}

The following example exhibits how the algorithm works.

\begin{example}
Consider a modified subnetwork of the Calvin cycle network studied in \cite{GrimbsSpatiotemporal}
\begin{align*}\label{calvincycle}
\xymatrix{5{\rm GAP}+{\rm E_4} \ar @{ -^{>}}^-{k_1}@< 1pt> [r] & {\rm GAPE_4}\ar  @{ -^{>}}^-{k_2}  @< 1pt> [l] \ar @{ -^{>}}^-{k_3}@< 1pt> [r]& 3{\rm Ru5P}+{\rm E_4} \ar @{ -^{>}}^-{k_4}@< 1pt> [l]\\
{\rm Ru5P}+{\rm E_5} \ar @{ -^{>}}^-{k_5}@< 1pt> [r]& {\rm Ru5PE_5} \ar  @{ -^{>}}^-{k_6}  @< 1pt> [l] \ar @{ -^{>}}^-{k_7}@< 1pt> [r]&{\rm RuBP}+{\rm E_5} \ar @{ -^{>}}^-{k_8}@< 1pt> [l]}
\end{align*}
Let $S_1={\rm GAP}, S_2={\rm GAPE_4}, S_3={\rm Ru5P}, S_4={\rm Ru5PE_5}, S_5={\rm RuBP}$, $S_6={\rm E_4}, S_7={\rm E_5}$. For the specific meanings of these substances, the readers can refer to \cite{GrimbsSpatiotemporal}.
Note that the reactions $3{\rm Ru5P}+{\rm E_4} \rightarrow {\rm GAPE_4}$ and ${\rm RuBP}+{\rm E_5} \rightarrow {\rm Ru5PE_5}$ are additional which do not exist in the real Calvin cycle network.
Suppose this subnetwork is complex balanced, then in terms of the algorithm, we can compute that $d_1\in \{\frac{5}{4},\frac{5}{3},\frac{5}{2},5\}$ while other $d_j=1$ for $j=2,\cdots,7$. Thus it can produce four kinds of CBP CRNs, listed as\\
\begin{enumerate}
\item[\emph{(1)}]$\xymatrix{d_1=\frac{5}{4}, \qquad 5S_1+S_6 \ar[r]^-{3.05 k_1}& S_1+S_2,~~~
	3S_1+S_6 &S_2\ar[l]_-{k_2}\ar @{ -^{>}}^-{k_3}@< 1pt> [r]& 3S_3+S_6, \ar @{ -^{>}}^-{k_4}@< 1pt> [l]}$\\
$\xymatrix{~~~~\quad \qquad  \quad S_3+S_7  \ar @{ -^{>}}^-{k_5}@< 1pt> [r]& S_4 \ar  @{ -^{>}}^-{k_6}  @< 1pt> [l] \ar @{ -^{>}}^-{k_7}@< 1pt> [r]&S_5+S_7; \ar @{ -^{>}}^-{k_8}@< 1pt> [l]}$\\
\item[\emph{(2)}]$\xymatrix{d_1=\frac{5}{3}, \qquad 5S_1+S_6 \ar[r]^-{12.86 k_1}& 2S_1+S_2,~~~S_2\ar[r]^-{k_2}&
	3S_1+S_6, &\cdots;}$\\
\item[\emph{(3)}]$\xymatrix{d_1=\frac{5}{2}, \qquad 5S_1+S_6 \ar[r]^-{97.66 k_1}& 3S_1+S_2,~~~S_2\ar[r]^-{k_2}&
	2S_1+S_6, &\cdots;}$\\
\item[\emph{(4)}]$\xymatrix{d_1=5, \qquad 5S_1+S_6 \ar[r]^-{3125 k_1}& 4S_1+S_2,~~~S_2\ar[r]^-{k_2}&
	S_1+S_6, &~\cdots,}$
\end{enumerate}
where we use dots in cases (2), (3), and (4) to represent the remaining reactions that are the same with those reversible reactions emerging in case (1).
\end{example}

\subsection{Lyapunov function PDEs to the stability of CBP MASs}
As stated in \cref{rm:gao}, the asymptotic stability of CBP MASs could be actually addressed through the linear conjugacy or the reconstruction strategy. Hence, the focus should not be on the stability result of CBP MASs itself, but on the alternative way, i.e., the Lyapunov function PDEs, to this result.

For a CBP MAS, defined in \cref{df:CBP-CRNs}, its Lyapunov function PDE, only referring to \cref{eq:LyapunovPDE}, is written as
\begin{equation}\label{cbpeq:LyapunovPDE}
\sum^{r}_{i=1}\tilde{k}_{i}\tilde{x}^{\tilde{v}_{\cdot i}}-\sum^{\tilde{r}}_{i=1}\tilde{k}_{i}\tilde{x}^{\tilde{v}_{\cdot i}}\exp\big\{ (\tilde{v}'_{\cdot i}-\tilde{v}_{\cdot i})^{\top}\nabla f(\tilde{x})\big\}=0.
\end{equation}
We thus have the following stability result for CBP MASs.

\begin{proposition}\emph{(\cite{Wu2020A})}\label{CBP-CRNstability}
For any CBP $\tilde{\mathcal{M}}$ stated in \cref{df:CBP-CRNs}, let $\tilde{x}^*\in {\mathbb{R}^{n}_{>0}}$ be an equilibrium. Then its induced Lyapunov function PDE \cref{cbpeq:LyapunovPDE} could produce a solution in the form of
\begin{equation}\label{eq:ge}
\tilde{G}({\tilde{x}})=\sum^{n}_{j=1}d_j\left(\tilde{x}^*_j-\tilde{x}_j-\tilde{x}_j\ln {\frac{\tilde{x}^*_j}{\tilde{x}_j}}\right)
\end{equation}
as a Lyapunov function to render the locally asymptotic stability of $\tilde{x}^*$ with respect to any initial condition in $\tilde{\mathscr{S}}^+(\tilde{x}^*)$ near $\tilde{x}^*$. Furthermore, if the network is persistent, then $\tilde{x}^*$ is globally asymptotically stable with respect to all initial conditions in $\tilde{\mathscr{S}}^+(\tilde{x}^*)$.
\end{proposition}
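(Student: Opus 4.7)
The plan is to reduce the CBP Lyapunov PDE \cref{cbpeq:LyapunovPDE} to the standard Lyapunov PDE \cref{eq:LyapunovPDE} for the underlying complex balanced $\mathcal{M}$, for which the pseudo-Helmholtz function $G$ in \cref{eq:Helmholtz} is already known to be a solution. First I would compute the gradient of the candidate $\tilde G$ componentwise: $\partial \tilde G/\partial \tilde x_j = d_j \ln(\tilde x_j/\tilde x_j^*)$, so $\nabla \tilde G(\tilde x)=D\,\mathrm{Ln}(\tilde x/\tilde x^*)$. Using $\tilde v'_{\cdot i}-\tilde v_{\cdot i}=D^{-1}(v'_{\cdot i}-v_{\cdot i})$ from \cref{df:CBP-CRNs}, the exponent in \cref{cbpeq:LyapunovPDE} becomes
\begin{equation*}
(\tilde v'_{\cdot i}-\tilde v_{\cdot i})^{\top}\nabla \tilde G(\tilde x)=(v'_{\cdot i}-v_{\cdot i})^{\top}D^{-\top}D\,\mathrm{Ln}(\tilde x/\tilde x^*)=(v'_{\cdot i}-v_{\cdot i})^{\top}\mathrm{Ln}(x/x^*),
\end{equation*}
where I have used $\tilde x=D^{-1}x$ and $\tilde x^*=D^{-1}x^*$ (\cref{re:CBP1}) so that the coordinatewise ratio $\tilde x/\tilde x^*$ equals $x/x^*$. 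For the prefactor, $\tilde k_i \tilde x^{\tilde v_{\cdot i}}=k_i\prod_j d_j^{v_{ji}}\prod_j(x_j/d_j)^{v_{ji}}=k_i x^{v_{\cdot i}}$. Term by term, \cref{cbpeq:LyapunovPDE} therefore coincides with \cref{eq:LyapunovPDE} evaluated at $f=G$, which holds because $\mathcal{M}$ is complex balanced with equilibrium $x^*$. This shows $\tilde G$ solves the CBP PDE.

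Next I would verify the hypotheses of \cref{asymstability} so that $\tilde G$ qualifies as a Lyapunov function. A direct computation gives $\nabla^2 \tilde G(\tilde x)=\mathrm{diag}(d_j/\tilde x_j)$, which is strictly positive definite on $\mathbb{R}^n_{>0}$; hence \cref{property2} holds with strict inequality for every nonzero $\mu\in\tilde{\mathscr{S}}$, on the whole positive orthant. Combined with $\tilde G(\tilde x^*)=0$, $\tilde G(\tilde x)>0$ for $\tilde x\neq \tilde x^*$ in $\tilde{\mathscr{S}}^+(\tilde x^*)$, and the dissipativity from Property 2.1(i), a standard Lyapunov argument yields local asymptotic stability of $\tilde x^*$ in $\tilde{\mathscr{S}}^+(\tilde x^*)$.

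For the global statement I would appeal to \cref{pro:CBP}: $\tilde x^*$ is the unique positive equilibrium in $\tilde{\mathscr{S}}^+(\tilde x^*)$. Given persistence, every trajectory starting in $\tilde{\mathscr{S}}^+(\tilde x^*)$ remains bounded away from $\partial\mathbb{R}^n_{\geq 0}$ and, being contained in a sublevel set of $\tilde G$ by monotonicity of $\tilde G$ along trajectories, is precompact in $\mathbb{R}^n_{>0}$. LaSalle's invariance principle then forces the $\omega$-limit set to lie in $\{\tilde x:\dot{\tilde G}(\tilde x)=0\}$, which by Property 2.1(ii) consists of equilibria; invariance plus uniqueness reduce this set to $\{\tilde x^*\}$, giving global asymptotic stability.

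The only nontrivial step is the algebraic identification of \cref{cbpeq:LyapunovPDE} with \cref{eq:LyapunovPDE}, which rests on the precise interplay between $D^{-1}$ in the reaction vectors and $D$ in $\nabla \tilde G$; the remaining obstacle is ensuring that compactness/invariance holds for the LaSalle step, for which persistence is the standard and sufficient hypothesis.
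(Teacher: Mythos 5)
Your proof is correct. The paper does not supply its own proof of \cref{CBP-CRNstability} (it is quoted from \cite{Wu2020A}), but your term-by-term reduction of \cref{cbpeq:LyapunovPDE} to \cref{eq:LyapunovPDE} via $\tilde{x}=D^{-1}x$, $\tilde{v}'_{\cdot i}-\tilde{v}_{\cdot i}=D^{-1}(v'_{\cdot i}-v_{\cdot i})$ and $\nabla\tilde{G}=D\,\mathrm{Ln}(\tilde{x}/\tilde{x}^*)$ is precisely the mechanism the definition of a CBP MAS is built on (cf.\ \cref{rm:gao}), and the local/global stability steps via \cref{asymstability}, \cref{pro:CBP} and LaSalle under persistence are the standard and intended arguments.
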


We refer to the function $\tilde{G}(\cdot)$ in \cref{eq:ge} as the generalized pseudo-Helmholtz function \cite{Ke2019Complex}.

\begin{example}
Given a complex balanced CRN as
\begin{align}
\xymatrix{2S_1\ar @{ -^{>}}^-{k_1}  @< 1pt> [r] & \emptyset, \ \ar @{ -^{>}}^-{k_2} @< 1pt> [l]}
\end{align}
it possesses a unique positive equilibrium $x^*=\sqrt{\frac{k_1}{k_2}}$. Based on \cref{alg}, we obtain the sole CBP MAS as
\begin{align}\label{eg:bith-death}
\xymatrix{2S_1 \ar[r]^-{4 k_1} & S_1, ~~~ \emptyset \ar[r]^-{k_2}&S_1}
\end{align}
under $D=2$. The CBP MAS corresponds to a typical birth-death process, and has a single equilibrium $\tilde{x}^*=\sqrt{\frac{k_1}{4k_2}}$. From
\cref{CBP-CRNstability}, it is straightforward to know that the Lyapunov function PDE of \cref{cbpeq:LyapunovPDE} for this CBP MAS admits a solution
$$\tilde{G}(\tilde{x})=2\left(\sqrt{\frac{k_1}{4k_2}}-\tilde{x}-\tilde{x}
\ln\sqrt{\frac{k_1}{4k_2}}+\tilde{x}\ln \tilde{x}\right)$$
to behave as a Lyapunov function rendering the local asymptotic stability of $\tilde{x}^*$.
\end{example}

It should be noted that the birth-death processes have been studied well from the viewpoint of microscopic level. Anderson \cite{Anderson2015Lyapunov} proposed the scaling limit of the non-equilibrium potential as a Lyapunov function to capture asymptotic stability. This example illustrates some specific biological systems might be analyzed on dynamical behaviors from the viewpoint of CBP MASs.



\section{Stability of MASs compounded of a CBP MAS and a series of MASs of dim$\mathscr{S}=1$} \label{sec4}
In this section, we will consider a class of MASs consisting of a CBP MAS and a series of MASs of dim$\mathscr{S}=1$, called CBP-$\ell$Sub1 MASs in the context. Like Com-$\ell$Sub1 MASs, we set all subnetworks in CBP-$\ell$Sub1 MASs to be mutually independent according to species.


\subsection{Revisiting stability of any $1$-dimensional MAS through Lyapunov function PDEs}
In \cref{sec2.2}, we have listed the Lyapunov function, i.e., \cref{sub1Lya}, as a solution of the corresponding PDEs for any $1$-dimensional MAS. Here, we give more details for use.

\begin{theorem}[\cite{Fang2015Lyapunov}]\label{thm:sub1}
For any $1$-dimensional $\mathcal{M}$ with an equilibrium ${x}{^*}\in \mathbb{R}^{n}_{>0}$, let
\begin{align}\label{eq:sub1Lya}
h({x},{u})=\sum_{\{i|\beta_{i} >0\}}({k}_i {x}^{{v}_{\cdot i}})\bigg(\sum^{\beta_{i}-1}_{j=0} {u}^j \bigg)
	+\sum_{\{i|\beta_{i} <0\}}({k}_i  {x}^{{v}_{\cdot i}})\bigg(-\sum^{-1}_{j=\beta_{i}} {u}^j \bigg),
	\end{align}
	where ${u}=\exp\{\omega^\top \nabla{f}\}$, $\omega \in \mathbb{R}^{n}\setminus \{\mathbbold{0}_{n}\}$ represents a set of bases of ${\mathscr{S}}$, and $\beta_{i}\in \mathbb{Z}\setminus\{0\}~satisfies~{v}'_{\cdot i}-{v}_{\cdot i}=\beta_{i} \omega, ~i=1,\cdots,r$. Then the Lyapunov function PDEs \cref{eq:LyapunovPDE} and \cref{eq:boundary} of this $\mathcal{M}$ have a solution in the form of
$$f({x})=\int^{\gamma({x})}_0 \ln \tilde{u}(y^{\dag}({x})+\alpha \omega)\dd \alpha$$
that can behave as a Lyapunov function. Here, $\tilde{u}$ makes $h({x},{u})=0$, $\gamma \in\mathscr{C}^2(\mathbb{R}^n_{>0};\mathbb{R}_{>0})$ and $y^{\dag}\in\mathscr{C}^2(\mathbb{R}^n_{>0};\mathbb{R}^n_{>0})$ gratify $x=y^{\dag}(x)+\gamma(x)\omega$ and
	$\gamma(x+\delta \omega)=\gamma(x)+\delta$ $\forall \delta\in \mathbb{R}$, respectively.
	
Further, if $\omega^\top \frac{\partial}{\partial{x}} h({x}^{*},1)<0$, this MAS is locally asymptotically stable at ${x}^{*}$.

\end{theorem}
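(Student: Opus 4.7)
The plan is to treat the one-dimensional stoichiometric subspace $\mathscr{S}=\text{span}\{\omega\}$ as a privileged direction, collapse the PDE \cref{eq:LyapunovPDE} to a scalar algebraic equation in $u=\exp\{\omega^{\top}\nabla f\}$, solve it for $\tilde u(x)$, and then antidifferentiate $\ln\tilde u$ along $\omega$ to obtain $f$. First I would substitute $v'_{\cdot i}-v_{\cdot i}=\beta_i\omega$ into \cref{eq:LyapunovPDE}. This replaces each exponential factor by $u^{\beta_i}$, so the PDE becomes $\sum_i k_i x^{v_{\cdot i}}(1-u^{\beta_i})=0$. Factoring $1-u^{\beta_i}=(1-u)\sum_{j=0}^{\beta_i-1}u^j$ for $\beta_i>0$ and $1-u^{\beta_i}=(1-u)(-\sum_{j=\beta_i}^{-1}u^j)$ for $\beta_i<0$ yields the identity $(1-u)\,h(x,u)=0$ with $h$ as defined in \cref{eq:sub1Lya}. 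Thus any $\tilde u(x)$ solving $h(x,\tilde u)=0$ automatically solves the PDE pointwise; existence of such a $\tilde u$ near $x^{*}$ follows from $h(x^{*},1)=\sum_i\beta_i k_i (x^{*})^{v_{\cdot i}}=0$ (the equilibrium condition contracted against $\omega$) together with the implicit function theorem applied to $h(x,\cdot)$ at $u=1$.

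Next I would verify that the prescribed $f$ is indeed a primitive of $\ln\tilde u$ along $\omega$. Using $x=y^{\dagger}(x)+\gamma(x)\omega$ with $\gamma(x+\delta\omega)=\gamma(x)+\delta$, one deduces $y^{\dagger}(x+\delta\omega)=y^{\dagger}(x)$, so the map $s\mapsto f(x+s\omega)$ reduces to
\begin{equation*}
f(x+s\omega)=\int_0^{\gamma(x)+s}\ln\tilde u\bigl(y^{\dagger}(x)+\alpha\omega\bigr)\,\dd\alpha.
\end{equation*}
Differentiating at $s=0$ gives $\omega^{\top}\nabla f(x)=\ln\tilde u(x)$, hence $u=\tilde u$, so $h(x,u)=0$ and the PDE \cref{eq:LyapunovPDE} holds; the corresponding boundary condition \cref{eq:boundary} follows by the same algebraic factoring restricted to the reactions whose reactant or product complex lies in $\mathcal{C}_{\bar x}$. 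This establishes that $f$ is a bona fide solution.

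For the local asymptotic stability, the goal is to verify the convexity hypothesis \cref{property2} of Property 2.2 on a neighborhood of $x^{*}$ and then invoke \cref{asymstability}. Since $\mathscr{S}$ is one-dimensional, I only need $\omega^{\top}\nabla^{2}f(x^{*})\omega>0$. Differentiating $\omega^{\top}\nabla f=\ln\tilde u$ once more along $\omega$ yields $\omega^{\top}\nabla^{2}f(x)\omega=\omega^{\top}\nabla\tilde u(x)/\tilde u(x)$; implicit differentiation of $h(x,\tilde u(x))=0$ then gives $\omega^{\top}\nabla\tilde u(x^{*})=-\omega^{\top}\partial_{x}h(x^{*},1)/\partial_{u}h(x^{*},1)$. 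A direct computation shows $\partial_{u}h(x^{*},1)=\sum_i k_i(x^{*})^{v_{\cdot i}}\beta_i(\beta_i-1)/2\ge 0$, and strict positivity is forced by the coexistence of positive and negative $\beta_i$'s that the equilibrium condition requires. Combined with the assumption $\omega^{\top}\partial_{x}h(x^{*},1)<0$, this gives $\omega^{\top}\nabla^{2}f(x^{*})\omega>0$, so \cref{property2} holds on a neighborhood by continuity and \cref{asymstability} concludes the proof.

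The main technical obstacle I anticipate is the smooth construction of $\tilde u(x)$ on a full neighborhood of $x^{*}$ and ensuring $\tilde u>0$ there so that $\ln\tilde u$ is defined; this rests on showing the branch of $h(x,u)=0$ through $(x^{*},1)$ persists as a positive-valued $\mathscr{C}^{2}$ function, for which the nonvanishing of $\partial_{u}h(x^{*},1)$ is the crucial ingredient. Handling the line-integral construction also demands care that $\gamma$ and $y^{\dagger}$ can be chosen in $\mathscr{C}^{2}(\mathbb{R}^{n}_{>0};\cdot)$ with the stated translation properties, which is a routine linear-algebra construction once a complement to $\omega$ is fixed.
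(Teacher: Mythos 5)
Your proposal is correct and follows essentially the same route as the source: the paper itself only cites this theorem from \cite{Fang2015Lyapunov} without reproving it, but the key ingredients you use --- the factorization of the PDE as $(1-u)h(x,u)=0$, the identity $\omega^{\top}\nabla f=\ln\tilde u$ from the line-integral construction, and the convexity computation $\omega^{\top}\nabla^{2}f\,\omega=-\omega^{\top}\partial_{x}h/(\tilde u\,\partial_{u}h)$ with $\partial_{u}h(x^{*},1)>0$ --- are exactly those the paper deploys in its Appendix~A proof of \cref{thm:CBP-sub1}. The only point worth tightening is that $\tilde u$ exists, is unique and positive on all of $\mathbb{R}^{n}_{>0}$ (not merely near $x^{*}$) because $h(x,\cdot)$ is strictly increasing on $(0,\infty)$ with limits $-\infty$ and a positive value, the strict monotonicity being guaranteed by the coexistence of positive and negative $\beta_i$ that you already noted; this upgrades your implicit-function-theorem step to the global statement the theorem asserts.
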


The above solution plays an important role on constructing solutions of the Lyapunov function PDEs for $1$-dimensional MASs compounded with CBP MASs.

\subsection{Lyapunov function PDE to the stability of CBP-$\ell$Sub1 MASs}
In this subsection we work out an appropriate Lyapunov function for CBP-$\ell$Sub1 MASs to character the convergent behavior by exploiting the Lyapunov function PDE method.

Consider a CBP-$\ell$Sub1 $\tilde{\mathcal{M}}$ involved with a CBP MAS coming from \cref{df:CBP-CRNs}, represented as $\tilde{\mathcal{M}}^{(0)}$ and some $1$-dimensional MASs, labeled as $\tilde{\mathcal{M}}^{(p)}|^{\ell}_{p=1}$. Note that all of these subnetworks are mutually independent, that is $\tilde{\mathcal{S}}^{(p)}\bigcap\tilde{\mathcal{S}}^{(q)}=\emptyset$, $\forall p,q\in \{0,1,\cdots,\ell\}$.

In the next, we denote
\begin{align*}
n=\sum^{\ell}_{p=0}n_{p},~
\tilde{v}^{(p)}_{\cdot i}=\bigotimes^{\iota -1}_{q=0} \mathbbold{0}_{n_q}\bigotimes
\tilde{v}_{\cdot i(p)}\bigotimes^{\ell}_{q=p+1}\mathbbold{0}_{n_q},~
\tilde{v}'^{(p)}_{\cdot i}=\bigotimes^{p -1}_{q=0} \mathbbold{0}_{n_q}\bigotimes
\tilde{v}'_{\cdot i(p)}\bigotimes^{\ell}_{q=p+1}\mathbbold{0}_{n_q},
\end{align*}
where $n_{p}, r_{p}, \tilde{v}_{\cdot i({p})}$, and $\tilde{v}'_{\cdot i(p)}$ stand for the number of species and of reactions of the $p$th subnetwork, the reactant complex as well as the resultant complex of the $i$th reaction, respectively; $\bigotimes$ is the Cartesian product; the term $n_q=0$ if $q<0$ or $q>\ell$. Immediately, we get the compound $\tilde{\mathcal{M}}$ with $$\tilde{\mathcal{S}}= \bigcup^\ell_{p=0} \tilde{\mathcal{S}}^{(p)},
~~~~
\tilde{\mathcal{C}}=
\bigcup^{\ell}_{p=0}
\bigcup^{r_{p}}_{i=1}\{\tilde{v}^{(p)}_{\cdot i}, \tilde{v}'^{(p)}_{\cdot i}\},
~~~~
\tilde{\mathcal{R}}=
\bigcup^{\ell}_{p=0}
\bigcup^{r_{p}}_{i=1}
\{\tilde{v}^{(p)}_{\cdot i} \stackrel{\tilde{k}^{(p)}_{i}}\longrightarrow \tilde{v}'^{(p)}_{\cdot i}\},$$
and the stoichiometric subspace $$\tilde{\mathscr{S}}=\bigotimes^{\ell}_{p=0}\tilde{\mathscr{S}}^{(p)},$$ where $\tilde{\mathscr{S}}^{(p)}$ represents the respective stoichiometric subspace of each subsystem.
Besides, the dynamics of $\tilde{\mathcal{M}}$ is as follows:
\begin{align}\label{eq:dynamicCBP-sub1}
\dot{\tilde{x}}=\sum^{\ell}_{p=0} \sum^{r_{p}}_{i=1}\tilde{k}^{(p)}_i \tilde{x}^{\tilde{v}^{(p)}_{\cdot i}}
\bigg(\tilde{v}'^{(p)}_{\cdot i}-\tilde{v}^{(p)}_{\cdot i}\bigg),
\end{align}
where $\tilde{x}=\bigotimes^{\ell}_{p=0}\tilde{x}^{(p)}$ is the state of the CBP-$\ell$Sub1 MAS.

Following with the above information, we write easily the Lyapunov function PDE for this CBP-$\ell$Sub1 $\tilde{\mathcal{M}}$ to be



\begin{align}\label{eq:CBP-sub1PDE}
	\sum^{\ell}_{p=0}
	\sum^{r_p}_{i=1}
	\left(
	\tilde{k}^{(p)}_{i} \tilde{x}^{(p)\tilde{v}_{\cdot i (p)}}-
	\tilde{k}^{(p)}_{i} \tilde{x}^{(p)\tilde{v}_{\cdot i (p)}}\exp \left\{(\tilde{v}'_{\cdot i(p)}-\tilde{v}_{\cdot i(p)})^\top \frac{\partial{f(\tilde{x})}}{\partial \tilde{x}^{(p)}}\right \}\right)=0.
	\end{align}
	
\begin{lemma}\label{lemmaGao}
For a CBP-$\ell$Sub1 $\tilde{\mathcal{M}}$ composed of a CBP MAS $\tilde{\mathcal{M}}^{(0)}$ and $\ell$ $1$-dimensional MASs $\tilde{\mathcal{M}}^{(p)}|^{\ell}_{p=1}$, the dynamics follows \cref{eq:dynamicCBP-sub1} and $\tilde{x}^*\in\mathbb{R}^n_{>0}$ is an equilibrium, then its Lyapunov function PDE of \cref{eq:CBP-sub1PDE} admits a solution
	\begin{align}\label{CBP-sub1Lya}
	f(\tilde{x})=\sum^{n_0}_{i=1}d_i\bigg (\tilde{x}^{{(0)}^*}_{i}-\tilde{x}^{(0)}_{i}-\tilde{x}^{(0)}_{i}
	\ln \frac{\tilde{x}^{{(0)}^*}_{i}}{\tilde{x}^{(0)}_{i}}\bigg)+\sum^{\ell}_{p=1}\int^{\gamma_p(\tilde{x}^{(p)})}_{0} \ln \tilde{u}^{(p)}(y^{\dag}(\tilde{x}^{(p)})+\alpha \omega_p)\text{d}\alpha,
	\end{align}
where $\gamma_p(\cdot),\tilde{u}^{(p)},y^{\dag}(\cdot),\omega_p$ share the same meanings with $\gamma(\cdot),\tilde{u},y^{\dag}(\cdot),\omega$ in \cref{sub1Lya}, respectively. 	
\end{lemma}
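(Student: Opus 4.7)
The plan is to exploit the species-level independence of the $\ell+1$ subnetworks to decouple the PDE \cref{eq:CBP-sub1PDE} into $\ell+1$ independent subsystem PDEs, each of which has already been solved by a previously established result. Concretely, I would write the ansatz \cref{CBP-sub1Lya} in the separated form $f(\tilde{x}) = G^{(0)}(\tilde{x}^{(0)}) + \sum_{p=1}^{\ell} f^{(p)}(\tilde{x}^{(p)})$, where $G^{(0)}$ is the generalized pseudo-Helmholtz function associated with the CBP subsystem $\tilde{\mathcal{M}}^{(0)}$ and $f^{(p)}$ is the $1$-dimensional Lyapunov function from \cref{thm:sub1} applied to $\tilde{\mathcal{M}}^{(p)}$. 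Since $\tilde{\mathcal{S}}^{(p)} \cap \tilde{\mathcal{S}}^{(q)} = \emptyset$ whenever $p \neq q$, the partial derivative $\partial f / \partial \tilde{x}^{(p)}$ collapses to $\nabla G^{(0)}(\tilde{x}^{(0)})$ when $p = 0$ and to $\nabla f^{(p)}(\tilde{x}^{(p)})$ when $p \geq 1$; no cross terms appear.

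Next, I would use the Cartesian-product embedding of the reaction vectors to verify that $\tilde{v}'^{(p)}_{\cdot i} - \tilde{v}^{(p)}_{\cdot i}$ has support entirely in the $p$th species block, so that the exponent $(\tilde{v}'_{\cdot i(p)} - \tilde{v}_{\cdot i(p)})^{\top} \partial f / \partial \tilde{x}^{(p)}$ in \cref{eq:CBP-sub1PDE} involves only the component of $\nabla f$ along the species of subsystem $p$. Combined with the fact that each monomial $\tilde{x}^{(p)\tilde{v}_{\cdot i(p)}}$ involves only those species, the outer sum over $p$ in \cref{eq:CBP-sub1PDE} splits into $\ell+1$ independent equations of the form
\begin{equation*}
\sum_{i=1}^{r_p}\tilde{k}^{(p)}_{i}\,\tilde{x}^{(p)\tilde{v}_{\cdot i(p)}}\Bigl(1 - \exp\bigl\{(\tilde{v}'_{\cdot i(p)}-\tilde{v}_{\cdot i(p)})^{\top} \nabla_{\tilde{x}^{(p)}} f\bigr\}\Bigr) = 0, \qquad p=0,1,\ldots,\ell.
\end{equation*}

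Finally, I would identify the $p=0$ equation as precisely the PDE \cref{cbpeq:LyapunovPDE} for the CBP subsystem $\tilde{\mathcal{M}}^{(0)}$, which \cref{CBP-CRNstability} confirms is satisfied by the generalized pseudo-Helmholtz function $G^{(0)}(\tilde{x}^{(0)})$. For each $p \geq 1$, the equation is exactly the $1$-dimensional Lyapunov PDE for $\tilde{\mathcal{M}}^{(p)}$ and is solved by $f^{(p)}(\tilde{x}^{(p)})$ via \cref{thm:sub1}: substituting $\tilde{u}^{(p)} = \exp\{\omega_p^{\top} \nabla f^{(p)}\}$ uses the identity $\tilde{v}'_{\cdot i(p)} - \tilde{v}_{\cdot i(p)} = \beta_i \omega_p$ to reduce the exponential relation to $h^{(p)}(\tilde{x}^{(p)},\tilde{u}^{(p)})=0$, which holds by construction of $\tilde{u}^{(p)}$. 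Summing these zero expressions over $p$ recovers \cref{eq:CBP-sub1PDE}.

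The main obstacle I anticipate is the bookkeeping that connects the ambient representation $(\tilde{v}^{(p)}_{\cdot i}, \partial/\partial \tilde{x})$ to the local subsystem representation $(\tilde{v}_{\cdot i(p)}, \partial/\partial \tilde{x}^{(p)})$, namely verifying rigorously that species-disjointness produces the block-diagonal gradient structure and that reaction vectors act only on their own block. Once this embedding is written out carefully, the lemma follows by a direct citation of \cref{CBP-CRNstability} for the CBP block and \cref{thm:sub1} for each $1$-dimensional block, with no further computation needed.
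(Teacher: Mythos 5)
Your proposal is correct and follows essentially the same route as the paper's own (much terser) proof: the paper likewise invokes the mutual species-independence of the subnetworks to split the PDE and then cites \cref{thm:sub1} and \cref{CBP-CRNstability} for the individual blocks. Your write-up simply makes explicit the block-diagonal decoupling that the paper leaves implicit.
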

\begin{proof}
The proof is given in Appendix A.
\end{proof}

Then we could reach the asymptotic stability of CBP-$\ell$Sub1 MASs through the Lyapunov function PDEs method.
\begin{theorem}\label{thm:CBP-sub1}
For a CBP-$\ell$Sub1 $\tilde{\mathcal{M}}$ defined as in \cref{lemmaGao}, let $\tilde{x}^*=\bigotimes^{\ell}_{p=0}\tilde{x}^{(p)}$ be a positive equilibrium in $\tilde{\mathcal{M}}$. Then for every $1$-dimensional $\tilde{\mathcal{M}}^{(p)}~(p=1,\cdots,\ell)$ if $\omega^\top_p \frac{\partial}{\partial\tilde{x}^{(p)}} h_p(\tilde{x}^{(p)^*},1)<0$, $\tilde{x}^*$ is locally asymptotically stable, where $\omega_p,h_p(\cdot,1)$ follow the same meanings with $\omega,h(\cdot,1)$ in \cref{thm:sub1}, respectively.	


\end{theorem}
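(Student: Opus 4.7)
The plan is to combine \cref{lemmaGao} with the general stability framework of \cref{asymstability}. \cref{lemmaGao} already supplies an explicit $\mathscr{C}^2$ solution $f(\tilde{x})$ to the Lyapunov PDE \cref{eq:CBP-sub1PDE}, so \textbf{Property 1} of \cref{sec2.2} immediately gives $\dot f(\tilde{x}) \leq 0$ along trajectories of \cref{eq:dynamicCBP-sub1}. What remains is to verify the Hessian positivity condition \cref{property2} on the stoichiometric subspace $\tilde{\mathscr{S}}$ in some neighborhood of $\tilde{x}^*$; once this is done, \cref{asymstability} yields the local asymptotic stability of $\tilde{x}^*$ with $f$ playing the role of the Lyapunov function.

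The key structural observation is that the species-wise independence hypothesis makes $f$ additively separable in the variables $\tilde{x}^{(0)},\tilde{x}^{(1)},\ldots,\tilde{x}^{(\ell)}$. Writing $f^{(0)}(\tilde{x}^{(0)})=\sum_{i=1}^{n_0} d_i(\tilde{x}^{(0)^*}_i-\tilde{x}^{(0)}_i-\tilde{x}^{(0)}_i\ln(\tilde{x}^{(0)^*}_i/\tilde{x}^{(0)}_i))$ and $f^{(p)}(\tilde{x}^{(p)})=\int_0^{\gamma_p(\tilde{x}^{(p)})}\ln\tilde u^{(p)}(y^\dag(\tilde{x}^{(p)})+\alpha\omega_p)\,\dd\alpha$ for $p\geq 1$, we have $f=\sum_{p=0}^{\ell}f^{(p)}$, so $\nabla^2 f(\tilde{x}^*)$ is block-diagonal with blocks $\nabla^2 f^{(p)}(\tilde{x}^{(p)^*})$. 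Because $\tilde{\mathscr{S}}=\bigoplus_{p=0}^{\ell}\tilde{\mathscr{S}}^{(p)}$ splits along the same coordinate blocks, any $\mu\in\tilde{\mathscr{S}}$ decomposes as $\mu=\sum_{p=0}^{\ell}\mu^{(p)}$ with $\mu^{(p)}\in\tilde{\mathscr{S}}^{(p)}$, and the quadratic form reduces to $\mu^\top\nabla^2 f(\tilde{x}^*)\mu=\sum_{p=0}^{\ell}(\mu^{(p)})^\top\nabla^2 f^{(p)}(\tilde{x}^{(p)^*})\mu^{(p)}$, so it is enough to check each block separately.

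For $p=0$, a direct computation gives $\nabla^2 f^{(0)}(\tilde{x}^{(0)^*})=\diag(d_i/\tilde{x}^{(0)^*}_i)\succ 0$, so every nonzero $\mu^{(0)}$ contributes strictly positively. For each $p\geq 1$, the subspace $\tilde{\mathscr{S}}^{(p)}$ is one-dimensional with basis $\omega_p$, so it suffices to show $\omega_p^\top\nabla^2 f^{(p)}(\tilde{x}^{(p)^*})\omega_p>0$. This is exactly the point where the hypothesis $\omega_p^\top\frac{\partial}{\partial \tilde{x}^{(p)}}h_p(\tilde{x}^{(p)^*},1)<0$ enters: differentiating the implicit defining relation $h_p(\tilde{x}^{(p)},\tilde u^{(p)}(\tilde{x}^{(p)}))=0$ and the identity $\omega_p^\top\nabla f^{(p)}(\tilde{x}^{(p)})=\ln\tilde u^{(p)}(\tilde{x}^{(p)})$ along the direction $\omega_p$ and evaluating at $\tilde{x}^{(p)^*}$ (where $\tilde u^{(p)}=1$) converts the hypothesis into the desired strict positivity of $\omega_p^\top\nabla^2 f^{(p)}(\tilde{x}^{(p)^*})\omega_p$, precisely as in the proof of \cref{thm:sub1}.

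The main obstacle, and the only place the hypothesis is truly needed, is this last step: tracking how the one-dimensional sufficient condition on $h_p$ propagates through the implicit definition of $\tilde u^{(p)}$ into strict convexity of $f^{(p)}$ along $\omega_p$ at $\tilde{x}^{(p)^*}$. Once the block-diagonal computation above is carried out, the Hessian is positive definite on $\tilde{\mathscr{S}}$ at $\tilde{x}^*$, and continuity extends this to a neighborhood $\mathcal{D}$. Then \textbf{Property 2} of \cref{sec2.2} ensures $\dot f(\tilde{x})=0$ in $\mathcal{D}$ only at equilibria, so restricting the initial condition to a sublevel set of $f$ contained in $\mathcal{D}$ (with energy below that of any boundary point) and invoking \cref{asymstability} completes the proof.
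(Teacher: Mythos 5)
Your proposal is correct and follows essentially the same route as the paper's own proof: block-diagonal decomposition of $\nabla^2 f$ from the species-wise separability, strict positivity of the CBP block, conversion of the hypothesis $\omega_p^\top \frac{\partial}{\partial\tilde{x}^{(p)}}h_p(\tilde{x}^{(p)^*},1)<0$ into $\omega_p^\top\nabla^2 f^{(p)}\omega_p>0$ via implicit differentiation of $h_p(\tilde{x}^{(p)},\tilde{u}^{(p)})=0$ (using $\frac{\partial}{\partial u^{(p)}}h_p>0$), and an appeal to \cref{asymstability}. The only cosmetic difference is that you evaluate at $\tilde{x}^*$ and extend by continuity, while the paper extends the sign condition on $h_p$ to a neighborhood first and then verifies positivity pointwise there.
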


\begin{proof}
The detailed proof can be found in Appendix A.
\end{proof}

An example is given to illustrate the availability of Lyapunov function PDE for CBP-$\ell$Sub1 MASs.

\begin{example}
 Given a CBP-$\ell$Sub1 $\tilde{\mathcal{M}}$ (${\ell}=1$) as follows
\begin{equation}\label{eg4}
\begin{array}{c:c}
~~\xymatrix{     2S^{(0)}_2+S^{(0)}_1 \ar[r]^-{1/2}         & 2S^{(0)}_2\ar[r]^-{1/2}  &4S^{(0)}_2,            \\
3S^{(0)}_2\ar[r]^-{1/8}  &S^{(0)}_1+S^{(0)}_2, &  }
~~&~~
\xymatrix{ 2S^{(1)}_1  \ar[r]^{1} &2S^{(1)}_2,\\
	S^{(1)}_2\ar[r]^{2} &S^{(1)}_1.}~~
\end{array}
\end{equation}
where the left part is a CBP $\tilde{\mathcal{M}}^{(0)}$ generated from the following complex balanced MAS
\begin{align}\label{eg:cb}
\xymatrix{
	2S^{(0)}_2 \ar[r]^-{2}  &  3S^{(0)}_2 \ar[dl]^-{1}    \\
	2S^{(0)}_2+S^{(0)}_1 \ar[u]^-{2} &   }
\end{align}
under $D=(1,\frac{1}{2})$ while the right part is a $1$-dimensional $\tilde{\mathcal{M}}^{(1)}$. Moreover, these two subnetworks are mutually independent according to species. This $\tilde{\mathcal{M}}$ is $3$-dimensional and of deficiency $2$. It is easy to compute an equilibrium to be $\tilde{x}^{*}=(1,4,1,1)^\top$ if the positive compatibility class of $\tilde{\mathcal{M}}^{(1)}$ is selected as $\{\tilde{x}_1^{(1)}+\tilde{x}_2^{(1)}=2\}$. The complexes are
\begin{align*}
&\tilde{v}_{\cdot 1(0)}=(1,2)^\top,
\tilde{v}'_{\cdot 1(0)}=\tilde{v}_{\cdot 2(0)}=(0,2)^\top,
\tilde{v}'_{\cdot 2(0)}=(0,4)^\top,
\tilde{v}_{\cdot 3(0)}=(0,3)^\top,\notag\\
&\tilde{v}'_{\cdot 3(0)}=(1,1)^\top,\tilde{v}_{\cdot 1(1)} =(2,0)^\top,
\tilde{v}'_{\cdot 1(1)} =(0,2)^\top,
\tilde{v}_{\cdot 2(1)} =(0,1)^\top,
\tilde{v}'_{\cdot 2(1)} =(1,0)^\top.
\end{align*}
From \cref{lemmaGao}, we get a solution for the Lyapunov function PDE of this CBP-$\ell$Sub1 $\mathcal{M}$ to be
$$
f(\tilde{x})=3-\tilde{x}^{(0)}_{1}-\tilde{x}^{(0)}_{1}\ln \tilde{x}^{(0)}_1-\frac{1}{2}\tilde{x}^{(0)}_2-\frac{1}{2}\tilde{x}^{(0)}_2
\ln \frac{4}{\tilde{x}^{(0)}_{2}}+\int^{\gamma_1(\tilde{x}^{(1)})}_{0} \ln \tilde{u}^{(1)}(y^{\dag}(\tilde{x}^{(1)})+\alpha \omega_1)\text{d}\alpha,$$
where
$\omega_1=(-1,1)^\top,~\gamma_1(\tilde{x}^{(1)})
=\frac{\tilde{x}^{(1)}_2-\tilde{x}^{(1)}_1}{2}$,
$\tilde{u}^{(1)}(\tilde{x}^{(1)})=\frac{-\tilde{x}^{{(1)}^2}_1+\tilde{x}^{(1)}_1\sqrt{\tilde{x}^{{(1)}^2}_1+8\tilde{x}^{(1)}_2}}{2\tilde{x}^{{(1)}^2}_1}$, and $y^{\dag}(\tilde{x}^{(1)})=\left(\frac{\tilde{x}^{(1)}_1+\tilde{x}^{(1)}_2}{2},\frac{\tilde{x}^{(1)}_1+\tilde{x}^{(1)}_2}{2}\right)^\top$. Based on \cref{eq:sub1Lya}, we have $$h_1(\tilde{x}^{(1)},u^{(1)})=\tilde{x}^{{(1)}^2}_1(1+u^{(1)})-2\tilde{x}^{(1)}_2u^{(1)^{-1}}$$ that naturally supports
$$\omega^\top_1 \frac{\partial}{\partial\tilde{x}^{(1)}} h_1(\tilde{x}^{(1)},1)|_{\tilde{x}^{(1)}
	=\tilde{x}^{(1)^*}}=-4\tilde{x}^{{(1)}^*}_1-2<0.$$
Therefore, the asymptotic stability of $\tilde{x}^*=(1,4,1,1)^\top$ is achieved according to \cref{thm:CBP-sub1}.

\end{example}

\section{Stablity of MASs compounded of a CBP MAS and a few autocatalytic MASs} \label{sec5}
We have successfully worked out CBP-$\ell$Sub1 MASs on asymptotic stability through the Lyapunov function PDEs method. However, a requirement is that all subnetworks in a CBP-$\ell$Sub1 MAS are mutually independent according to species. This restriction leads to the solution of the PDE of a CBP-$\ell$Sub1 MAS able to be constructed by a combination of solution of the corresponding PDE of every subnetwork. The case will become complicated if the species among all subnetworks is not independent. We follow this issue in this section by defining a kind of MASs compounded of a CBP MAS and a few $1$-dimensional autocatalytic MASs.



\subsection{Compound MASs of a CBP MAS and autocatalytic MASs}
Autocatalytic reactions are ubiquitous in living organisms, like metabolism, DNA replications, etc. Generally speaking, they refer to a class of reactions where the reactants themselves act as catalysts. There are various expressions with respect to this notion \cite{hoessly2019stationary,Hordijk2004,Gopalkrishnan2011}, and one of them is as follows.

\begin{definition}[autocatalystic MAS, a reduced version of \cite{hoessly2019stationary}]\label{autoca}
	A MAS is said to be an autocatalytic one, labeled by $\mathcal{M}=(\mathcal{S,C,R,K})$, if the following conditions are true
	\begin{enumerate}
		\item[\emph{(1)}] all reactions have a net consumption of one $S_i$ and a net production one $S_j$, i.e., in the form of
		\begin{equation*}
		\xymatrix{S_i+(m-1)S_j \ar[r]^-{k_m}  & m S_j},
		\end{equation*}
		where $m\geq1,i,j= 1,\cdots, n$;
		
		\item[\emph{(2)}] there is one monomolecular linkage class;
		
		\item[\emph{(3)}] if there is a net consumption of one $S_i$ and a net producing one $S_j$ in a reaction, then $S_i \longrightarrow S_j, S_j \longrightarrow S_i \in \mathcal{R}$, which means mass exchange in both directions only happens in single molecular reactions.
	\end{enumerate}
\end{definition}

\begin{remark}\label{re:auto}
From \cref{autoca}, it is obvious that if an autocatalytic MAS only has two species, then its stoichiometric subspace is $1$-dimensional. We name this class of networks two-species autocatalytic ones, which are our main concerns in the subsequent investigation.
\end{remark}

Consider an $\tilde{\mathcal{M}}$ composed of a CBP $\tilde{\mathcal{M}}^{(0)}=(\tilde{\mathcal{S}}^{(0)},\tilde{\mathcal{C}}^{(0)},\tilde{\mathcal{R}}^{(0)},\tilde{\mathcal{K}}^{(0)})$ as stated in \cref{df:CBP-CRNs}, and a number of two-species autocatalytic MASs in the form of
\begin{align}\label{eq:autocatalytic}
&\xymatrix{S_p+(m-1)S_{n_0+p} \ar[r]^-{k^{(p)}_{m,1}}  & m S_{n_0+p}},
&\xymatrix{S_{n_0+p} \ar[r]^-{k^{(p)}_{2}}  &  S_{p}},
\end{align}
labeled by $\tilde{\mathcal{M}}^{(p)}=(\tilde{\mathcal{S}}^{(p)},\tilde{\mathcal{C}}^{(p)},\tilde{\mathcal{R}}^{(p)},\tilde{\mathcal{K}}^{(p)}),~p=1,\cdots,\ell$, where $m \in \mathcal{I}'_p\subseteq\mathcal{I}_p=\{1,\cdots,\tau_p\}$ and $1\in\mathcal{I}'_p$. Here, we denote $\tilde{\mathcal{S}}^{(0)}=\{S_1,\cdots,S_{n_0}\}$ and $\tilde{\mathcal{S}}^{(p)}=\{S_p,S_{n_0+p}\}$ with $p=1,\cdots,\ell$. We call the above $\tilde{\mathcal{M}}$ a CBP-$\ell$ts-Autoca MAS, which naturally meets
(i) $n_0\geq {\ell}$; (ii) $ \forall i,j \in \{1,\cdots,{\ell}\}, ~\tilde{\mathcal{S}}^{(i)} \bigcap \tilde{\mathcal{S}}^{(j)}=\emptyset$ while $\tilde{\mathcal{S}}^{(0)} \bigcap \tilde{\mathcal{S}}^{(p)}=S_p$, $\forall\{p\}^{\ell}_{p=1}$.

By taking the same notations as in CBP-$\ell$Sub1 MASs, we use $n_{p}$ and $r_{p}$ to represent the numbers of species and reactions, and $\tilde{v}_{\cdot i(p)}$ as well as $\tilde{v}'_{\cdot i(p)}$ to represent the reactant complex and the resultant complex of the $i$th reaction ($i=1,\cdots,r_{p}$) of every subnetwork ($p=0,\cdots,\ell$) in a CBP-$\ell$ts-Autoca MAS, respectively. Thus we immediately obtain $n=n_0+{\ell}$, $\tilde{v}^{{(0)}^\top}_{\cdot i}=(\tilde{v}^\top_{\cdot i(0)},\mathbbold{0}^\top_{\ell}),\tilde{v}'^{{(0)}^\top}_{\cdot i}=(\tilde{v}'^\top_{\cdot i(0)},\mathbbold{0}^\top_{\ell})$, and $\tilde{v}^{{(p)}^\top}_{\cdot i}=(\mathbbold{0}^\top_{p-1}, \tilde{v}_{1i(p)}, \mathbbold{0}^\top_{n_0-1}, \tilde{v}_{2i(p)}, \mathbbold{0}^\top_{{\ell}-p}),$ $\tilde{v}'^{{(p)}^\top}_{\cdot i}=(\mathbbold{0}^\top_{p-1}, \tilde{v}'_{1i(p)}, \mathbbold{0}^\top_{n_0-1}, \tilde{v}'_{2i(p)}, \mathbbold{0}^\top_{{\ell}-p})$ when $p=1, \cdots, {\ell}$.
The dynamics follows the same expression as \cref{eq:dynamicCBP-sub1}, i.e.,
\begin{equation}\label{eq:dynamicCBP-auto}
\dot{\tilde{x}}=\sum^{\ell}_{p=0}\sum^{r_
	{p}}_{i=1}
\tilde{k}^{(p)}_i \tilde{x}^{\tilde{v}^{(p)}_{\cdot i}}\left(\tilde{v}'^{(p)}_{\cdot i}-\tilde{v}^{(p)}_{\cdot i}\right),
\end{equation}
but with all variables defined as for a CBP-$\ell$ts-Autoca MAS.

From \cref{equilibrium}, a concentration vector
\begin{equation}\label{equilibriumAuto}
\tilde{x}^*=(\tilde{x}_1^*,\cdots, \tilde{x}^*_{n_0},\tilde{x}^*_{n_0+1},\cdots, \tilde{x}_{n_0+{\ell}}^*)^\top \in \mathbb{R}^{n}_{> 0}
\end{equation}
is a positive equilibrium in the CBP-$\ell$ts-Autoca $\tilde{\mathcal{M}}$ if $\dot{\tilde{x}}=0$ evaluated at $\tilde{x}=\tilde{x}^*$. For every positive equilibrium in the CBP-$\ell$ts-Autoca $\tilde{\mathcal{M}}$, we have the following property.

\begin{Property}\label{pro:3}
For a CBP-$\ell$ts-Autoca $\tilde{\mathcal{M}}$ modelled by \cref{eq:dynamicCBP-auto}, a concentration vector $\tilde{x}^*\in\mathbb{R}^n_{>0}$ given by \cref{equilibriumAuto} is an equilibrium in $\tilde{\mathcal{M}}$ if and only if $\tilde{x}^{{(0)}^*}=(\tilde{x}_1^*,\cdots,\tilde{x}^*_{n_0})^\top$ is a positive equilibrium in $\tilde{\mathcal{M}}^{(0)}$ while $\tilde{x}^{{(p)}^*}=(\tilde{x}_p^*,\tilde{x}_{n_0+p}^*)^\top$ is a reaction vector balanced equilibrium in $\tilde{\mathcal{M}}^{(p)}$ for $p=1,\cdots,\ell$.
\end{Property}

\begin{proof}
	The detailed proof can be found in Appendix B.
\end{proof}

We characterize the number of positive equilibria of a CBP-$\ell$ts-Autoca MAS in each positive stoichiometric compatibility class through the following lemma.

\begin{lemma}\label{le:CBP-autoequilibriumunique}
Given a CBP-$\ell$ts-Autoca $\tilde{\mathcal{M}}$, ruled by \cref{eq:dynamicCBP-auto} and admitting an equilibrium $\tilde{x}^*\in\mathbb{R}^n_{>0}$ defined in \cref{equilibriumAuto}, each positive stoichiometric compatibility class contains at most a positive equilibrium if one of the following conditions holds:
\begin{enumerate}
    \item[\emph{(1)}] $\forall \{p\}^{\ell}_{p=1}$ and $\forall m$ in \cref{eq:autocatalytic} with $m \in \mathcal{I}'_p\subseteq\mathcal{I}_p=\{1,\cdots,\tau_p\}$ and $1\in \mathcal{I}'_p$, $\tau_p\leq2$; or
\item[\emph{(2)}] $\forall \{p\}^{\ell}_{p=1}$ and $\forall m$ in \cref{eq:autocatalytic} with $m \in \mathcal{I}'_p\subseteq\mathcal{I}_p=\{1,\cdots,\tau_p\}$ and $1 \in \mathcal{I}'_p$, $\tilde{\mathcal{M}}^{(p)}$ is mass-conserved if the corresponding subset $\mathcal{I}'_p$ contains one element greater than $2$.
\end{enumerate}
	
	
\end{lemma}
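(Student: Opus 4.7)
The plan is to use \cref{pro:3} to split the equilibrium condition: every positive equilibrium $\tilde{x}^{*}$ of $\tilde{\mathcal{M}}$ consists of a positive equilibrium $\tilde{x}^{(0)*}$ of the CBP subnetwork $\tilde{\mathcal{M}}^{(0)}$ together with, for each $p=1,\ldots,\ell$, a reaction-vector balanced equilibrium $\tilde{x}^{(p)*}$ of $\tilde{\mathcal{M}}^{(p)}$. \cref{pro:CBP} already supplies uniqueness of $\tilde{x}^{(0)*}$ within each $\tilde{\mathscr{S}}^{(0)}$-compatibility class of $\tilde{\mathcal{M}}^{(0)}$, so the task reduces to controlling the coupling created by the shared species $S_{p}$ and showing that it cannot support a second equilibrium inside the full positive stoichiometric compatibility class.

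First I would parametrize $\tilde{\mathscr{S}}^{+}(\tilde{x}_{0})$ via the decomposition $\tilde{\mathscr{S}}=\tilde{\mathscr{S}}^{(0)}+\sum_{p=1}^{\ell}\mathrm{span}\{e_{p}-e_{n_{0}+p}\}$: a generic point in the class has the form $\tilde{x}_{0}+\xi^{(0)}+\sum_{p}\alpha_{p}(e_{p}-e_{n_{0}+p})$ with $\xi^{(0)}\in\tilde{\mathscr{S}}^{(0)}$. The slack $\alpha_{p}$ fixes $\tilde{x}^{*}_{n_{0}+p}=\tilde{x}_{0,n_{0}+p}-\alpha_{p}$, while the shift $\sum_{p}\alpha_{p}e_{p}$ relocates $\tilde{x}^{(0)*}$ into a different $\tilde{\mathscr{S}}^{(0)}$-compatibility class. \cref{pro:CBP} then makes $\tilde{x}^{(0)*}$---in particular every $\tilde{x}^{*}_{p}=\Psi_{p}(\alpha_{1},\ldots,\alpha_{\ell})$---a smooth function of $\alpha$, and uniqueness reduces to uniqueness of $\alpha$ solving the $\ell$ reaction-vector balance equations
\begin{equation*}
k^{(p)}_{2}(\tilde{x}_{0,n_{0}+p}-\alpha_{p})=\sum_{m\in\mathcal{I}'_{p}}k^{(p)}_{m,1}\,\Psi_{p}(\alpha)\,(\tilde{x}_{0,n_{0}+p}-\alpha_{p})^{m-1},\qquad p=1,\ldots,\ell.
\end{equation*}
In Case~(1), where $\tau_{p}\leq 2$ forces $\mathcal{I}'_{p}\subseteq\{1,2\}$, the right-hand side is linear in $\Psi_{p}(\alpha)$, giving a strictly monotone one-variable relation between $\tilde{x}^{*}_{p}$ and $\tilde{x}^{*}_{n_{0}+p}$. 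In Case~(2), for indices $p$ at which $\mathcal{I}'_{p}$ contains an exponent larger than $2$, the assumed mass-conservation on $\tilde{\mathcal{M}}^{(p)}$ pins the value of $\tilde{x}^{*}_{p}+\tilde{x}^{*}_{n_{0}+p}$ on the compatibility class, collapsing the $p$-th balance to a single-variable polynomial equation whose positive roots I would count by a Descartes-type sign argument.

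The chief obstacle will be the multidimensional coupling: $\Psi_{p}$ depends on every $\alpha_{q}$, so the $\ell$ balance equations form a coupled nonlinear system even after the case-by-case simplifications. To handle this I would prove that the map $\alpha\mapsto(\text{balance residuals})$ is strictly monotone in an appropriate componentwise partial order by combining the one-variable monotonicity just established with strict convexity of the generalized pseudo-Helmholtz function $\tilde{G}$ from \cref{eq:ge} restricted to each $\tilde{\mathscr{S}}^{(0)}$-compatibility class, which controls the sign pattern of $\partial\Psi_{p}/\partial\alpha_{q}$. Injectivity of this monotone map then delivers the claimed at-most-one positive equilibrium per positive stoichiometric compatibility class.
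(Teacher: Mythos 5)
Your skeleton coincides with the paper's: invoke \cref{pro:3} to split the equilibrium condition, use \cref{pro:CBP} for the CBP block, and then count positive solutions of the scalar balance equations $\sum_{m\in\mathcal{I}'_p}k^{(p)}_{m,1}\tilde{x}_p\tilde{x}^{m-1}_{n_0+p}=k^{(p)}_2\tilde{x}_{n_0+p}$ case by case. The paper simply fixes the CBP equilibrium value $\tilde{x}^{(0)^*}$ (so $\tilde{x}^*_p$ is treated as given) and observes: for $\mathcal{I}'_p=\{1\}$ or $\{1,2\}$ the equation is affine in $\tilde{x}_{n_0+p}$ and has at most one positive root; and when $\mathcal{I}'_p$ contains an exponent greater than $2$, the assumed conservation relation $\tilde{x}_p+\tilde{x}_{n_0+p}=M_p$ together with the already-determined $\tilde{x}^*_p$ pins $\tilde{x}^*_{n_0+p}=M_p-\tilde{x}^*_p$ outright, with no root counting needed. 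You are right that the coupling through the shared species $S_p$ is the delicate point this ``fix $\tilde{x}^{(0)^*}$ first'' argument glosses over, but the two devices you propose to close it do not work as stated.

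First, the injectivity of the coupled residual map is the entire content of your added step, and you only assert it. Strict convexity of $\tilde{G}$ on each $\tilde{\mathscr{S}}^{(0)}$-compatibility class gives existence, uniqueness and smoothness of $\Psi(\alpha)$, but it does not control the signs of the derivatives $\partial\Psi_p/\partial\alpha_q$: these are entries of an inverse-Hessian-type matrix restricted to $\tilde{\mathscr{S}}^{(0)}$, and positive definiteness says nothing about the signs of individual entries, so the componentwise monotone structure you invoke is unsubstantiated precisely where the argument is supposed to go beyond the paper. Second, in Case (2) the Descartes count fails: substituting $\tilde{x}_p=M_p-\tilde{x}_{n_0+p}$ into the balance yields a polynomial whose coefficient sign sequence is $+,?,\ldots,?,-$, which admits up to three sign changes once some $m>2$ is present (already for $\mathcal{I}'_p=\{1,m\}$ with $m\geq 3$ the sequence is $+,-,+,-$), so Descartes permits up to three positive roots rather than one. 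This is exactly why the paper does not root-count in that case but instead uses mass conservation to read off $\tilde{x}^*_{n_0+p}$ from the fixed $\tilde{x}^*_p$. As written, then, the proposal replaces the paper's (admittedly terse) argument with two steps that are respectively unproved and incorrect, so the gap is genuine.
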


\begin{proof}
	The proof can be caught in Appendix B.	
\end{proof}

\subsection{Lyapunov function PDE to the stability of CBP-$\ell$ts-Autoca MASs}
In this subsection, we capture the asymptotic stability of CBP-$\ell$ts-Autoca MASs also through the Lyapunov function PDEs strategy.

From the definition of CBP-$\ell$ts-Autoca MASs, it is easy to write out the corresponding Lyapunov function PDE, which has the same portrait as \cref{eq:CBP-sub1PDE} but with all variables defined as for a CBP-$\ell$ts-Autoca MAS.

\begin{lemma}\label{le:CBP-autoLya}
For a CBP-$\ell$ts-Autoca $\tilde{\mathcal{M}}$ governed by \cref{eq:dynamicCBP-auto} and possessing an equilibrium $\tilde{x}^*$ defined by \cref{equilibriumAuto}, the twice differentiable function
\begin{align}\label{eq:CBP-autoLya}
f(\tilde{x})=\sum^{n_0}_{i=1}d_i\bigg (\tilde{x}^*_{i}-\tilde{x}_{i}-\tilde{x}_{i}
\ln \frac{\tilde{x}^*_{i}}{\tilde{x}_{i}}\bigg)
+\sum^{\ell}_{p=1}\int^{\tilde{x}_{n_0+p}}_{\tilde{x}^*_{n_0+p}}
\ln \frac{{k}^{(p)}_{2}\alpha_p}{\sum _{m\in \mathcal{I}'_p}
{k}^{(p)}_{m,1} \tilde{x}^*_{p} \alpha^{m-1}_{p}}d \alpha_p,
\end{align}
with $d_p=1$ from $p=1$ to $\ell$ is a solution of the Lyapunov function PDE \emph{(}\cref{eq:CBP-sub1PDE}-like equation\emph{)} induced by $\tilde{\mathcal{M}}$.
\end{lemma}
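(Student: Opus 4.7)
The plan is to verify directly that $f$ in \cref{eq:CBP-autoLya} solves the PDE \cref{eq:CBP-sub1PDE} by grouping the outer sum along the subnetwork index $p=0,1,\ldots,\ell$ and checking that each block contributes zero. The first step is to read off the gradient. Because $\tilde{x}^*_p$ (rather than $\tilde{x}_p$) sits inside the autocatalytic integrand, the integral part depends only on the ``new'' coordinates $\tilde{x}_{n_0+p}$; consequently, for $i=1,\ldots,n_0$ I would get $\partial f/\partial\tilde{x}_i=d_i\ln(\tilde{x}_i/\tilde{x}^*_i)$, while for $i=n_0+p$ the derivative coincides with the integrand evaluated at $\alpha_p=\tilde{x}_{n_0+p}$. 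This decoupling at the gradient level is what makes the otherwise coupled shared-species problem tractable.

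For the $p=0$ block I will invoke \cref{re:CBP1} and \cref{df:CBP-CRNs}: using $\tilde{v}'_{\cdot i(0)}-\tilde{v}_{\cdot i(0)}=D^{-1}(v'_{\cdot i(0)}-v_{\cdot i(0)})$, $\tilde{k}^{(0)}_i\tilde{x}^{\tilde{v}_{\cdot i(0)}}=k^{(0)}_i x^{v_{\cdot i(0)}}$, and $\tilde{x}_j/\tilde{x}^*_j=x_j/x^*_j$, the $d_j$ factors in the exponent cancel and the block reduces to the pseudo-Helmholtz PDE for the underlying complex balanced MAS $\mathcal{M}^{(0)}$. By the classical result recalled in \cref{sec2.2} (case (1)), this quantity vanishes identically, so the $p=0$ contribution is zero without any further hypothesis on $D$.

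For each autocatalytic block $p\geq 1$ the prescription $d_p=1$ is what makes everything collapse. Substituting the gradient into the exponent attached to $S_p+(m-1)S_{n_0+p}\to m S_{n_0+p}$, the $\ln(\tilde{x}_p/\tilde{x}^*_p)$ term cancels the $\tilde{x}^*_p$ factor inside the integrand, leaving the logarithm of $k^{(p)}_2\tilde{x}_{n_0+p}/(\tilde{x}_p\sum_{m'\in\mathcal{I}'_p}k^{(p)}_{m',1}\tilde{x}_{n_0+p}^{m'-1})$. Summing the resulting contributions over $m\in\mathcal{I}'_p$ telescopes to $\sum_m k^{(p)}_{m,1}\tilde{x}_p\tilde{x}_{n_0+p}^{m-1}-k^{(p)}_2\tilde{x}_{n_0+p}$, and the parallel computation for the reverse reaction $S_{n_0+p}\to S_p$ returns exactly the negative of this, so the $p$-th block sums to zero as well.

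The hard part is the bookkeeping across the shared species: every exponent containing $\partial f/\partial\tilde{x}_p$ for $p\leq\ell$ is relevant both to the $p=0$ reactions and to the $p$-th autocatalytic reactions, so nothing can be checked ``block by block'' unless one is careful. The decoupling above hinges on the specific interplay between the normalization $d_p=1$ and the appearance of $\tilde{x}^*_p$ (not $\tilde{x}_p$) inside the integrand: without this exact match the $\tilde{x}^*_p$ factors would not drop out and the telescoping in the autocatalytic blocks would fail. Once this bookkeeping is arranged, the lemma follows from the two symbolic identities above together with the classical fact that the pseudo-Helmholtz function solves the PDE for $\mathcal{M}^{(0)}$.
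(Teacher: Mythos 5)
Your proposal is correct and follows essentially the same route as the paper's proof: compute $\nabla f$ (noting the integral term depends only on $\tilde{x}_{n_0+p}$), let the $p=0$ block vanish via the generalized pseudo-Helmholtz/CBP identity of \cref{CBP-CRNstability}, and cancel each autocatalytic block directly using $d_p=1$ so that $\exp\{(-1,1)^\top\zeta\}=k^{(p)}_{2}\tilde{x}_{n_0+p}/(\tilde{x}_p\sum_{m}k^{(p)}_{m,1}\tilde{x}^{m-1}_{n_0+p})$. The only cosmetic differences are that you re-derive the $p=0$ cancellation from the underlying complex balanced system rather than citing \cref{CBP-CRNstability}, and the over-$m$ cancellation is a common-factor cancellation rather than a telescoping sum.
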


\begin{proof}
The detailed proof is given in Appendix B.
\end{proof}


Then we can reach the asymptotic stability of CBP-$\ell$ts-Autoca MASs based on the above results.

\begin{theorem}\label{thm:CBPauto}
For a CBP-$\ell$ts-Autoca $\tilde{\mathcal{M}}$ described by \cref{eq:dynamicCBP-auto} and admitting an equilibrium $\tilde{x}^* \in \mathbb{R}^n_{>0}$ defined by \cref{equilibriumAuto}, $\tilde{x}^*$ is locally asymptotically stable
\begin{enumerate}
    \item[\emph{(1)}] if $\forall \{p\}^{\ell}_{p=1}$ and $\forall m$ in \cref{eq:autocatalytic} with $m \in \mathcal{I}'_p\subseteq\mathcal{I}_p=\{1,\cdots,\tau_p\}$ and $1\in\mathcal{I}'_p$, $\tau_p\leq 2$; or
\item[\emph{(2)}] if $\forall \{p\}^{\ell}_{p=1}$ when $\mathcal{I}'_p$ contains one element greater than $2$, $\tilde{\mathcal{M}}^{(p)}$ is mass-conserved and $\sum_{m\in \mathcal{I}'_p}(2-m){k}^{(p)}_{m,1}
\tilde{x}^{* m-1}_{n_0+p}>0$.
\end{enumerate}
\end{theorem}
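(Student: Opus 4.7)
The plan is to apply \cref{asymstability} to the Lyapunov function $f(\tilde{x})$ furnished by \cref{le:CBP-autoLya}. Since that lemma already certifies $f\in\mathscr{C}^2(\mathbb{R}^n_{>0};\mathbb{R})$ and that $f$ solves the Lyapunov function PDE associated with $\tilde{\mathcal{M}}$, the only outstanding work is to verify the Hessian positivity condition \cref{property2} on some neighborhood $\mathcal{D}$ of $\tilde{x}^*$; \cref{asymstability} then immediately delivers the locally asymptotic stability claim.

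Because the first sum in \cref{eq:CBP-autoLya} is separable in $\tilde{x}_1,\ldots,\tilde{x}_{n_0}$, and because the $p$-th integral depends only on the single variable $\tilde{x}_{n_0+p}$ (the symbol $\tilde{x}^*_p$ inside the integrand is a fixed parameter, not a variable), the Hessian $\nabla^2 f(\tilde{x})$ is diagonal. Its first $n_0$ diagonal entries are the strictly positive quantities $d_i/\tilde{x}_i$, while the $(n_0+p)$-th diagonal entry ($p=1,\ldots,\ell$) reads
$$\psi_p(\tilde{x}_{n_0+p})=\frac{1}{\tilde{x}_{n_0+p}}-\frac{\sum_{m\in\mathcal{I}'_p}(m-1)k^{(p)}_{m,1}\tilde{x}^*_p \tilde{x}_{n_0+p}^{m-2}}{\sum_{m\in\mathcal{I}'_p}k^{(p)}_{m,1}\tilde{x}^*_p \tilde{x}_{n_0+p}^{m-1}}.$$
Plugging in $\tilde{x}=\tilde{x}^*$ and using the reaction-vector balance identity $\sum_{m\in\mathcal{I}'_p}k^{(p)}_{m,1}\tilde{x}^*_p (\tilde{x}^*_{n_0+p})^{m-1}=k^{(p)}_2\tilde{x}^*_{n_0+p}$ supplied by \cref{pro:3}, a short manipulation collapses this to
$$\psi_p(\tilde{x}^*_{n_0+p})=\frac{1}{k^{(p)}_2(\tilde{x}^*_{n_0+p})^2}\sum_{m\in\mathcal{I}'_p}(2-m)k^{(p)}_{m,1}\tilde{x}^*_p (\tilde{x}^*_{n_0+p})^{m-1}.$$

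It then remains to show this quantity is strictly positive under either hypothesis. Under condition (1), $\mathcal{I}'_p\subseteq\{1,2\}$ with $1\in\mathcal{I}'_p$, so the $m=2$ contribution vanishes and the sum reduces to the strictly positive $k^{(p)}_{1,1}\tilde{x}^*_p$; under condition (2) the positivity is precisely the standing hypothesis on $\sum_{m\in\mathcal{I}'_p}(2-m)k^{(p)}_{m,1}\tilde{x}^{*\,m-1}_{n_0+p}$. Hence $\nabla^2 f(\tilde{x}^*)$ is strictly positive definite, and by continuity of $\nabla^2 f$ there is an open neighborhood $\mathcal{D}$ of $\tilde{x}^*$ on which $\mu^\top\nabla^2 f(\tilde{x})\mu>0$ for every $\mu\neq \mathbbold{0}_n$, in particular for every nonzero $\mu\in\tilde{\mathscr{S}}$. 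Applying \cref{asymstability} to any initial point in $\mathcal{D}$ whose $f$-value lies below the boundary values of $\mathcal{D}$ concludes the argument. The main technical step is the algebraic collapse of $\psi_p$ at $\tilde{x}^*$ via the reaction-vector balance relation; the mass-conservation assumption in condition (2) plays a supporting role, ensuring the positive compatibility class is bounded so the energy-level comparison required by \cref{asymstability} is well-posed, whereas under condition (1) the restriction $\tau_p\leq 2$ already forces $\tilde{x}_p+\tilde{x}_{n_0+p}$ to be conserved along the autocatalytic reactions alone.
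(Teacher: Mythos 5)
Your proposal is correct and takes essentially the same approach as the paper: compute the (diagonal) Hessian of the Lyapunov function from \cref{le:CBP-autoLya}, show that the $(n_0+p)$-th entry has the sign of $\sum_{m\in\mathcal{I}'_p}(2-m)k^{(p)}_{m,1}\tilde{x}^{m-1}_{n_0+p}$, verify positivity under either hypothesis, and invoke \cref{asymstability}. The only cosmetic difference is that you first evaluate this entry at $\tilde{x}^*$ via the reaction-vector-balance identity and then extend by continuity, whereas the paper simplifies the entry to $\sum_{m}(2-m)k^{(p)}_{m,1}\tilde{x}^{m-1}_{n_0+p}\big/\sum_{m}k^{(p)}_{m,1}\tilde{x}^{m}_{n_0+p}$ directly and argues on a neighborhood; the sign analysis and the use of the conservation relations $\mathscr{M}_p$ in case (2) are the same.
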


\begin{proof}
The detailed proof can be found in Appendix B.
\end{proof}

The following two examples serve for illustrating the validity of the Lyapunov function PDEs way in CBP-$\ell$ts-Autoca MASs.

\begin{example}
Consider a CBP-$\ell$ts-Autoca $\tilde{\mathcal{M}}$ ($\ell=1$) with the reaction route
$$\xymatrix{S_3 \ar @{ -^{>}}^-{k^{(1)}_2}  @< 1pt> [r]& S_1 \ar  @{ -^{>}}^-{k^{(1)}_{1,1}}  @< 1pt> [l] \ar[r]^-{k^{(0)}_1} &S_2,}$$
$$\qquad \qquad \qquad \xymatrix{mS_2\ar[r]^-{k^{(0)}_2 m^m}&(m-1)S_2+S_1},~m\geq2,$$
$$\qquad \qquad \quad\quad\quad \xymatrix{S_1+(m'-1)S_3\ar[r]^-{k^{(1)}_{m',1}}&m'S_3,~m'\geq2.}$$
This $\tilde{\mathcal{M}}$ is non-weakly-reversible, $2$-dimensional and of deficiency $2$.
Note that the present CBP $\tilde{\mathcal{M}}^{(0)}$ (identified by the reaction rate constants with superscript $(0)$) and the autocatalytic $\tilde{\mathcal{M}}^{(1)}$ correspond to motif G (see \cref{eg:CBP}) and motif F given in \cite{NenMotif}, respectively. As a matter of fact, $\tilde{\mathcal{M}}$ can be viewed as a special case of the compound network of motif G and motif F.

Let $m=2$, $m'=2$, i.e., $\tau_1=2$ and $\mathcal{I}'_1=\{1,2\}$, and $k^{(1)}_{1,1}>k^{(1)}_{2,1}k^{(0)}_2$, then $\tilde{\mathcal{M}}^{(0)}$ admits a positive equilibrium $(\tilde{x}^*_1,\tilde{x}^*_2)=\left(k^{(0)}_2,\frac{1}{2}\sqrt{k^{(0)}_1}\right)$ and has complexes as
 \begin{align*}
 \tilde{v}_{\cdot 1(0)}=(1,0)^\top,
 \tilde{v}'_{\cdot 1(0)}=(0,1)^\top,
 \tilde{v}_{\cdot 2(0)}=(0,2)^\top,
 \tilde{v}'_{\cdot 2(0)}=(1,1)^\top
 \end{align*}
while $\tilde{\mathcal{M}}^{(1)}$ admits a reaction vector balanced equilibrium $$(\tilde{x}^*_1,\tilde{x}^*_3)=\left(k^{(0)}_2, \frac{k^{(1)}_{2} k^{(0)}_2}{k^{(1)}_{1,1}-k^{(1)}_{2,1}k^{(0)}_2}\right)$$ and possesses complexes as
\begin{align*}
\tilde{v}_{\cdot 1(1)}=\tilde{v}'_{\cdot 3(1)}=(1,0)^\top,
\tilde{v}'_{\cdot 1(1)}=\tilde{v}_{\cdot 3(1)}=(0,1)^\top,
\tilde{v}_{\cdot 2(1)}=(1,1)^\top,
\tilde{v}'_{\cdot 2(1)}=(0,2)^\top.
\end{align*}
According to \cref{le:CBP-autoLya}, the corresponding Lyapunov function PDE for $\tilde{\mathcal{M}}$ admits a solution in the form of
\begin{align*}
&f(\tilde{x})=\tilde{x}^*_{1}-\tilde{x}_{1}-\tilde{x}_{1}
\ln \frac{\tilde{x}^*_{1}}{\tilde{x}_{1}}+2\bigg(\tilde{x}^*_{2}-\tilde{x}_{2}-\tilde{x}_{2}
\ln \frac{\tilde{x}^*_{2}}{\tilde{x}_{2}}\bigg)+\tilde{x}_3 \ln ({k^{(1)}_2\tilde{x}_3})-\tilde{x}^*_3 \ln ({k^{(1)}_2\tilde{x}^*_3})-\\\notag
&
k^{(1)^{-1}}_{2,1}\bigg[(k^{(1)}_{1,1}+k^{(1)}_{2,1}\tilde{x}_3)\ln(\tilde{x}^*_1k^{(1)}_{1,1}+\tilde{x}^*_1k^{(1)}_{2,1} \tilde{x}_3)-(k^{(1)}_{1,1}+k^{(1)}_{2,1}\tilde{x}^*_3)\ln(\tilde{x}^*_1k^{(1)}_{1,1}+\tilde{x}^*_1k^{(1)}_{2,1} \tilde{x}^*_3)\bigg]
\end{align*}
It is not hard to compute the Hessian matrix of $f(\tilde{x})$ to be
\begin{align*}
\nabla^2 f(\tilde{x})=\emph{diag}\bigg(\tilde{x}^{-1}_1, 2\tilde{x}^{-1}_2, \frac{k^{(1)}_{1,1}}{\tilde{x}_3(k^{(1)}_{1,1}+k^{(1)}_{2,1}\tilde{x}_3)}\bigg),
\end{align*}
which is obviously strictly convex. From \cref{thm:CBPauto} or \cref{asymstability}, $f(\tilde{x})$ is qualified as a Lyapunov function to suggest the asymptotic stability of $(\tilde{x}^*_1, \tilde{x}^*_2, \tilde{x}^*_3)^\top$.
\end{example}

\begin{example}
Consider a CBP-$\ell$ts-Autoca $\tilde{\mathcal{M}}$ (${\ell}=1$) with $\text{dim}\tilde{\mathscr{S}}=3$ and deficiency $4$, as can be seen below, including a CBP $\tilde{\mathcal{M}}^{(0)}$ in the left hand side and an autocatalytic $\tilde{\mathcal{M}}^{(1)}$ in the right hand side,
\begin{equation*}
\begin{array}{c:c}
~\xymatrix{     2S_2+S_1 \ar[r]^-{k^{(0)}_1}         & 2S_2\ar[r]^-{k^{(0)}_2}  &4S_2,            \\
	3S_2\ar[r]^-{k^{(0)}_3}  &S_1+S_2, &  }
~&~
\xymatrix{ S_1\ar @{ -^{>}}^-{k^{(1)}_{1,1}}  @< 1pt> [r]&S_3, \ar  @{ -^{>}}^-{k^{(1)}_2}  @< 1pt> [l]\\
	S_1+(m-1)S_3\ar[r]^-{k^{(1)}_{m,1}} &mS_3, m=2,3,4,}
\end{array}
\end{equation*}
where $k^{(0)}_1=\frac{1}{2}, k^{(0)}_1=\frac{1}{2}, k^{(0)}_3=\frac{1}{8}$, and $k^{(1)}_{1,1}=8, k^{(1)}_{2,1}=2, k^{(1)}_{3,1}=1, k^{(1)}_{4,1}=1, k^{(1)}_2=12.$
Note that the CBP $\tilde{\mathcal{M}}^{(0)}$ is generated by the complex balanced MAS given in \cref{eg:cb} under $D=\emph{diag}(1,\frac{1}{2})$, and possesses an equilibrium $\tilde{x}^{(0)^*}=(1,4)^\top$ and the complexes
\begin{align*}
\tilde{v}_{\cdot 1(0)}=(1,2)^\top,
\tilde{v}'_{\cdot 1(0)}=\tilde{v}_{\cdot 2(0)}=(0,2)^\top,
\tilde{v}'_{\cdot 2(0)}=(0,4)^\top,
\tilde{v}_{\cdot 3(0)}=(0,3)^\top,
\tilde{v}'_{\cdot 3(0)}=(1,1)^\top.
\end{align*}
Besides, the autocatalytic $\tilde{\mathcal{M}}^{(1)}$ satisfies (i) $\mathcal{I}'_1=\{1,2,3,4\}$; (ii) reaction vector balancing with an equilibrium $\tilde{x}^{(1)^*}=(1,1)$ in the positive stoichiometric compatibility class constrained by $\{\tilde{x}_1+\tilde{x}_3=2\}$. Its complexes are
\begin{align*}
&\tilde{v}_{\cdot 1(1)}=\tilde{v}'_{\cdot 5(1)}=(1,0)^\top,
\tilde{v}_{\cdot 5(1)}=\tilde{v}'_{\cdot 1(1)}=(0,1)^\top,
\tilde{v}_{\cdot 2(1)}=(1,1)^\top,
\tilde{v}'_{\cdot 2(1)}=(0,2)^\top,\notag\\
&\tilde{v}_{\cdot 3(1)}=(1,2)^\top,
\tilde{v}'_{\cdot 3(1)}=(0,3)^\top,
\tilde{v}_{\cdot 4(1)}=(1,3)^\top,
\tilde{v}'_{\cdot 4(1)}=(0,4)^\top.
\end{align*}
From \cref{le:CBP-autoLya}, its Lyapunov function PDE has a solution in the form of
$$f(\tilde{x})=3-\tilde{x}_{1}-\tilde{x}_{1}\ln \tilde{x}_1-\frac{1}{2}\tilde{x}_2-\frac{1}{2}\tilde{x}_2
\ln \frac{4}{\tilde{x}_{2}}
+\int^{\tilde{x}_3}_{1}
\ln \frac{12\tilde{x}_{3}}{8+2\tilde{x}_3+\tilde{x}^2_3+\tilde{x}^3_3}d\tilde{x}_3.$$
Then the Hessian matrix of $f(x)$ is computed by
$$\nabla^2f(\tilde{x})
=\emph{diag}\bigg(\tilde{x}^{-1}_1,\frac{1}{2}\tilde{x}^{-1}_2,\frac{8-\tilde{x}^2_3-2\tilde{x}^3_3}{\tilde{x}_{3}(8+2\tilde{x}_3+\tilde{x}^2_3+\tilde{x}^3_3)}\bigg).$$
It is evident to verify that $8-\tilde{x}^2_3-2\tilde{x}^3_3|_{\tilde{x}^*=(1,4,1)}>0$.
Finally, in terms of \cref{thm:CBPauto}, $f(\tilde{x})$ can behave like a Lyapunov function to prove this network locally asymptotically stable at the equilibrium $\tilde{x}^*=(1,4,1)^\top$.
\end{example}

 Inspired by the CBP-$\ell$ts-Autoca MAS, a method similar to dimensionality reduction is proposed in the following corollary, which shows that if a MAS can be decomposed into a CBP MAS and some $1$-dimensional MASs, then the stability of its equilibria can be achieved under certain conditions.

\begin{corollary}
If an MAS with a positive equilibrium $x^*\in\mathbb{R}^n_{>0}$ can be decomposed into a CBP $\mathcal{M}^{(0)}=(\mathcal{S}^{(0)},\mathcal{C}^{(0)},\mathcal{R}^{(0)},\mathcal{K}^{(0)})$ and $\ell$ independent two-species autocatalytic MASs according to species, labeled by $\mathcal{M}^{(p)}=(\mathcal{S}^{(p)},\mathcal{C}^{(p)},\mathcal{R}^{(p)},\mathcal{K}^{(p)}),~p=1,\cdots,\ell$, and moreover, $S_p=\mathcal{S}^{(0)}\bigcap \mathcal{S}^{(p)}$ with $\mathcal{{S}}^{(0)}=\{S_1,\cdots,S_{n_0}\}, \mathcal{{S}}^{(p)}=\{S_p,S_{n_0+p}\}$, $\ell\leq n_0$ and $n_0+\ell=n$, then $x^*$ is locally asymptotically stable with the following conditions to be true
\begin{enumerate}
\item [\emph{(1)}]
for every $S_p$, $\exists v_{\cdot i(0)}\to v'_{\cdot i(0)}\in\mathcal{M}^{(0)}$ such that $v_{pi(0)}=1$ while $v'_{pi(0)}=0$, and $\forall v_{\cdot i(p)}\to v'_{\cdot i(p)}\in \mathcal{M}^{(p)}$ such that $v_{pi(p)},v'_{pi(p)}$ equal to $0$ or $1$;
\item [\emph{(2)}] $\sum_{m\in \mathcal{I}'_p}(2-m){k}^{(p)}_{m,1}
{x}^{* m-1}_{n_0+p}>0$.
\end{enumerate}
\end{corollary}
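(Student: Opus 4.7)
The approach is to identify the decomposed MAS with a CBP-$\ell$ts-Autoca MAS and then reuse, essentially verbatim, the Lyapunov function construction of \cref{le:CBP-autoLya} followed by the stability argument of \cref{thm:CBPauto}. By \cref{re:auto}, each two-species autocatalytic $\mathcal{M}^{(p)}$ is $1$-dimensional and its reactions take the canonical form \cref{eq:autocatalytic}; combined with the sharing pattern $\mathcal{S}^{(0)}\cap\mathcal{S}^{(p)}=\{S_p\}$ and pairwise disjointness for $p\geq 1$, this matches exactly the structural setup under which \cref{thm:CBPauto} was established.

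The first step is to propose the Lyapunov function candidate
\begin{equation*}
f(x)=\sum_{i=1}^{n_0} d_i\left(x_i^{*}-x_i-x_i\ln\frac{x_i^{*}}{x_i}\right)+\sum_{p=1}^{\ell}\int_{x_{n_0+p}^{*}}^{x_{n_0+p}}\ln\frac{k^{(p)}_{2}\alpha_p}{\sum_{m\in\mathcal{I}'_p}k^{(p)}_{m,1}\,x_p^{*}\,\alpha_p^{m-1}}\,\dd\alpha_p,
\end{equation*}
patterned after \cref{eq:CBP-autoLya}. The next step is to substitute $f$ into the Lyapunov function PDE \cref{eq:LyapunovPDE} associated with the full MAS and verify pointwise cancellation. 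The verification splits into two pieces: the reactions in $\mathcal{M}^{(0)}$ contribute a block that collapses to the CBP PDE of \cref{cbpeq:LyapunovPDE}, handled by \cref{CBP-CRNstability}, while the reactions in each $\mathcal{M}^{(p)}$ contribute a block that vanishes by the same algebra as in \cref{le:CBP-autoLya}, since the logarithmic integrand has been engineered so that $\exp\{(v'_{\cdot i(p)}-v_{\cdot i(p)})^\top\nabla f\}$ produces the required ratio. Condition (1) is precisely what decouples these two blocks: the restriction $v_{pi(p)},v'_{pi(p)}\in\{0,1\}$ ensures that only the pseudo-Helmholtz partial derivative $d_p(1-x_p^{*}/x_p)$ enters the autocatalytic side, while the existence of a consumption-type reaction for each $S_p$ in $\mathcal{M}^{(0)}$ places the CBP cancellation in the regime already covered by \cref{CBP-CRNstability}.

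After PDE verification, I would compute $\nabla^2 f(x^{*})$. The first summand contributes the strictly positive block $\diag(d_i/x_i^{*})_{i=1,\dots,n_0}$. Each integral summand contributes a single diagonal entry whose sign at $\alpha_p=x_{n_0+p}^{*}$ equals the sign of $\sum_{m\in\mathcal{I}'_p}(2-m)k^{(p)}_{m,1}x^{*\,m-1}_{n_0+p}$, which is positive by condition (2). Hence $\mu^{\top}\nabla^{2}f(x^{*})\mu>0$ for all nonzero $\mu\in\mathscr{S}$, so \cref{property2} holds in a neighbourhood of $x^{*}$. Combining strict convexity with the dissipativity property inherited from \cref{asymstability} then yields local asymptotic stability of $x^{*}$, which is what the corollary asserts.

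\textbf{Main obstacle.} The delicate point is the PDE verification when a shared species $S_p$ appears in reactions of $\mathcal{M}^{(0)}$, because $\partial f/\partial x_p$ receives contributions from both the pseudo-Helmholtz term and from the $p$-th integral, so the CBP cancellation of \cref{CBP-CRNstability} is a priori disrupted. Condition (1) is exactly what saves the argument: forcing the $S_p$-dependence in $\mathcal{M}^{(0)}$ to be of the pure ``consume one unit'' type and the $S_p$-dependence in $\mathcal{M}^{(p)}$ to be $\{0,1\}$-valued makes the extra factors generated by the integral term telescope across each linkage class of $\mathcal{M}^{(0)}$. Carefully bookkeeping this telescoping for all shared species simultaneously is the part of the argument that requires the most care; once it is in place, the remaining steps mirror \cref{le:CBP-autoLya} and \cref{thm:CBPauto} almost word for word.
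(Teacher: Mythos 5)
Your proposal is correct and follows essentially the same route as the paper, whose proof is simply to combine \cref{le:CBP-autoLya} and \cref{thm:CBPauto}; your expanded verification of the PDE and the Hessian matches the arguments already given in Appendix B for those two results. One small remark: the ``main obstacle'' you describe does not actually arise, because the integrand in \cref{eq:CBP-autoLya} involves the constant $x_p^{*}$ rather than $x_p$, so $\partial f/\partial x_p$ receives no contribution from the $p$-th integral and the CBP block of the PDE cancels exactly as in \cref{CBP-CRNstability} without any telescoping.
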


\begin{proof}
Combing the results in \cref{le:CBP-autoLya} and \cref{thm:CBPauto} about the CBP-$\ell$ts-Autoca MAS,  the conclusion follows immediately.
\end{proof}
\section{Conclusions}\label{sec6}
In this paper, CBP MASs, CBP-$\ell$Sub1 MASs and CBP-$\ell$ts-Autoca MASs are consecutively defined from a complex balanced CRN according to some rules, following which an algorithm is proposed to compute CBP MASs systematically. All of these three classes of networks can be any dimensional, non-weakly reversible and of arbitrary deficiency. Moreover, for CBP MASs/CBP-$\ell$ts-Autoca MASs it has been shown that each positive stoichiometric  compatibility class contains a unique/at most a positive equilibrium. We use the Lyapunov functions PDEs method to successfully catch locally asymptotic stability of these three classes of MASs. The result greatly supports our previous conjecture \cite{Fang2015Lyapunov} that the Lyapunov function PDEs of every stable MAS have a solution capable of acting as a Lyapunov function to render the asymptotic stability.


\section*{Appendix}
\subsection*{A. Proofs of results in \cref{sec4}}

\begin{proof}[\textbf{The proof of \cref{lemmaGao}}]
Observe that every $\tilde{\mathcal{M}}^{(p)}$ for $p=0,\cdots,{\ell}$ contained in the CBP-$\ell$Sub1 $\tilde{\mathcal{M}}$ are mutually independent according to species, we can directly acquire that $f(\tilde{x})$ is a solution of the Lyapunov function PDE \cref{eq:CBP-sub1PDE} as long as combine \cref{thm:sub1} and \cref{CBP-CRNstability}.
\end{proof}

\begin{proof}[\textbf{The proof of \cref{thm:CBP-sub1}}]
Since $h_p(\tilde{x}^{(p)},{u}^{(p)})$ is continuous in $\mathbb{R}^{n_p}_{>0}\times \mathbb{R}_{>0}$ and $\omega^\top_p \frac{\partial}{\partial\tilde{x}^{(p)}} h_p(\tilde{x}^{(p)^*},1)<0$, there must exist some neighbourhoods of $\tilde{x}^{(p)^*}$ for $p=1,\cdots,{\ell}$, denoted by $\mathcal{N}(\tilde{x}^{(p)^*})$, such that for any $\tilde{x}^{(p)}\in \mathcal{N}(\tilde{x}^{(p)^*})$, it holds
$$\omega^\top_p \frac{\partial}{\partial\tilde{x}^{(p)}} h_p(\tilde{x}^{(p)},\tilde{u}^{(p)})<0,$$
where $\tilde{u}^{(p)}$ makes $h_p(\tilde{x}^{(p)},{u}^{(p)})=0$. Thus,  $\forall \mu \in \tilde{\mathscr{S}}$, $\forall \tilde{x}\in
\{\mathbb{R}^{n_0}_{>0}\bigotimes^{\ell}_{p=1} \mathcal{N}(\tilde{x}^{(p)^*})\}\bigcap
\tilde{\mathscr{S}}^+(\tilde{x}^*)$ there is
\begin{align}\label{eq:secderivative1}
\mu^\top\nabla^2f(\tilde{x})\mu=&\mu^{(0)^\top}\text{diag}
\left\{{d_1}/{\tilde{x}^{(0)}_1},\cdots,{d_{n_0}}/{\tilde{x}^{(0)}_{n_0}}\right\}\mu^{(0)}
+\sum^{\ell}_{p=1} \mu^{(p)^\top} \nabla^2 f_p(\tilde{x}^{(p)})\mu^{(p)},
\end{align}
where $f_p(\tilde{x}^{(p)})$ is defined according to \cref{thm:sub1}. Clearly, the first term in the above equation is non-negative. Since $\tilde{u}^{(p)}=\exp\{\omega^\top_p\nabla f_p\}$, we have $\nabla^2f_p \omega_p=\frac{\nabla \tilde{u}^{(p)}}
{\tilde{u}^{(p)}}$. Further, we get
\begin{align}\label{eq:secderivative2}
\sum^{\ell}_{p=1} \mu^{(p)^\top} \nabla^2 f_p(\tilde{x}^{(p)})\mu^{(p)}
&=\sum^{\ell}_{p=1}
\frac{\mu^{(p)^\top}\mu^{(p)}}{\omega^\top_p\omega_p}
\omega^\top_p \nabla^2 f_p(\tilde{x}^{(p)})\omega_p
\notag\\
&=\sum^{\ell}_{p=1}
\frac{\mu^{(p)^\top}\mu^{(p)}}{\omega^\top_p\omega_p}
\omega^\top_p
\frac{\nabla \tilde{u}^{(p)}(\tilde{x}^{(p)})}
{\tilde{u}^{(p)}}
\notag\\
&=
\sum^{\ell}_{p=1} \frac{\mu^{(p)^\top}\mu^{(p)}}{\omega^\top_p\omega_p}
\frac{-\omega^\top_p\frac{\partial}{\partial \tilde{x}^{(p)}} h_p(\tilde{x}^{(p)},\tilde{u}^{(p)})
	/
	\frac{\partial}{\partial {u}^{(p)}} h_p(\tilde{x}^{(p)},\tilde{u}^{(p)})}
{\tilde{u}^{(p)}}
\notag\\
&\geq0.
\end{align}
Note that it's easy to get $\frac{\partial}{\partial {u}^{(p)}} h_p(\tilde{x}^{(p)},\tilde{u}^{(p)})>0$ in penultimate equality just by some simple calculations.
Therefore, we can see $\mu^\top\nabla^2f(\tilde{x})\mu\geq0$ where the equality holds iff $\mu=\mathbbold{0}_n$. Finally utilizing \cref{asymstability}, the result comes quickly, that $\tilde{x}^*$ is locally asymptotically stable.
\end{proof}

\subsection*{B. Proofs of results in \cref{sec5}}

\begin{proof}[\textbf{The proof of \cref{pro:3}}]
Just inserting the $\tilde{x}^{(0)^*}$ and $\tilde{x}^{(p)^*}$ ($p=1, \cdots,{\ell}$) into the dynamics of the CBP-$\ell$ts-Autoca MAS governed by \cref{eq:dynamicCBP-auto}, it is easy to know that $\tilde{x}^*$ is a positive equilibrium point of the considered $\tilde{\mathcal{M}}$ distinctly, and vice versa.
\end{proof}

\begin{proof}[\textbf{The proof of \cref{le:CBP-autoequilibriumunique}}]
	The results can be proved with the aid of the dynamics of the considered $\tilde{\mathcal{M}}$, which is specifically stated as
	\begin{align*}
	\left\{
	\begin{array}{ll}
	\dot {\tilde{x}}_{p}&=-\sum_{m\in \mathcal{I}'_p}{k}^{(p)}_{m,1}\tilde{x}_{p} \tilde{x}^{m-1}_{n_0+p}+{k}^{(p)}_{2} \tilde{x}_{n_0+p}
	+ \sum^{r_0}_{i=1}
	\tilde{k}^{(0)}_i \tilde{x}^{v^{(0)}_{\cdot i}}
	\left(v'^{(0)}_{pi}-v^{(0)}_{pi}\right)\\
	\dot {\tilde{x}}_{j}&=\sum^{r_0}_{i=1}\tilde{k}^{(0)}_i \tilde{x}^{\tilde{v}^{(0)}_{\cdot i}}
	\left(\tilde{v}'^{(0)}_{ji}-\tilde{v}^{(0)}_{ji}\right),~~~j={\ell}+1,\cdots,n_0 \\
	\dot {\tilde{x}}_{n_0+p}&=\sum_{m\in \mathcal{I}'_p}{k}^{(p)}_{m,1} \tilde{x}_{p} \tilde{x}^{m-1}_{n_0+p}-{k}^{(p)}_{2} \tilde{x}_{n_0+p},~~~p=1,\cdots,{\ell}.
	\end{array}
	\right.
	\end{align*}
	It demonstrates that the whole $\tilde{\mathcal{M}}$ is balanced if and only if the involved CBP $\tilde{\mathcal{M}}^{(0)}$ and autocatalytic $\tilde{\mathcal{M}}^{(p)}$s are both balanced. Since \cref{pro:CBP} reveals that the $\tilde{\mathcal{M}}^{(0)}$ possesses a unique positive equilibrium in every positive stoichiometric class, then the number of equilibrium in each positive stoichiometric class induced by the CBP-$\ell$ts-Autoca $\tilde{\mathcal{M}}$ can be decided by the remaining  $\tilde{\mathcal{M}}^{(p)}$s. Suppose the equilibrium $\tilde{x}^{(0)^*}\in \mathbb{R}^{n_0}_{>0}$ of $\tilde{\mathcal{M}}^{(0)}$ is given, then the remaining dynamic equations turn to be
	$$\dot {\tilde{x}}_{n_0+p}=\sum_{m\in \mathcal{I}'_p}{k}^{(p)}_{m,1} \tilde{x}^*_{p} \tilde{x}^{m-1}_{n_0+p}-{k}^{(p)}_{2} \tilde{x}_{n_0+p}.$$ 	
	Apparently, when $\mathcal{I}'_p=\{1\}$, $\dot x_{n_0+p} =0$ has a unique positive solution while $\mathcal{I}'_p=\{1,2\}$ it has precisely one positive solution only if $k^{(p)}_2-\tilde{x}^*_p k^{(p)}_{2,1}>0$. Thus $\forall \{p\}^{\ell}_{p=1}$, when $\tau_p\leq2$, there is at most one equilibrium in $\tilde{\mathscr{S}}^+(\tilde{x}_0)$ for every initial state $\tilde{x}_0 \in \mathbb{R}^n_{>0}$.
	
Next, let $W_{\mathcal{I}'_p}\subseteq \{1,\cdots,{\ell}\}$ represent an index set that satisfies $p\in W_{\mathcal{I}'_p}$ if $\mathcal{I}'_p$ includes an element greater than 2, i.e. $\exists~ m>2$. Then for the $\mathcal{M}^{(p)}$ with $p\in W_{\mathcal{I}'_p}$, there will be at most two positive intersection points when the polynomial $\sum_{m\in \mathcal{I}'_p}k^{(p)}_{m,1} \tilde{x}^*_{p} \tilde{x}^{m-1}_{n_0+p}$ intersects with $k^{(p)}_{2} \tilde{x}_{n_0+p}$ at the plane $\tilde{x}_{n_0+p}>0$. Since such $\tilde{\mathcal{M}}^{(p)}$ is mass-conserved, we know that when $\tilde{x}^*_l$ is fixed there is at most one positive equilibrium in its positive stoichiometric compatibility class. Therefore, it indicates that for an $\tilde{\mathcal{M}}$ there is at most an equilibrium in $ \tilde{\mathscr{S}}^+(\tilde{x}_0)\bigcap\mathscr{M}$ for any initial condition $\tilde{x}_0\in\mathbb{R}^n_{>0}\bigcap( \bigcup_ {p\in W_{\mathcal{I}'_p}}\mathscr{M}_p)$, where $\mathscr{M}_p=\{\tilde{x}_p+\tilde{x}_{n_0+p}=M_p, M_p>0 \}$ represents the conservation law that $\tilde{\mathcal{M}}^{(p)}$ follows.
	\end{proof}

\begin{proof}[\textbf{The proof of \cref{le:CBP-autoLya}}]
First of all, the corresponding Lyapunov function PDE for the CBP-$\ell$ts-Autoca $\tilde{\mathcal{M}}$ can be written as
\begin{align}\label{eq:CBP-autoPDE}
	\sum^{\ell}_{p=0}
	\sum^{r_p}_{i=1}
	\left(
	\tilde{k}^{(p)}_{i} \tilde{x}^{(p)\tilde{v}_{\cdot i (p)}}-
	\tilde{k}^{(p)}_{i} \tilde{x}^{(p)\tilde{v}_{\cdot i (p)}}\exp \left\{(\tilde{v}'_{\cdot i(p)}-\tilde{v}_{\cdot i(p)})^\top \frac{\partial{f(\tilde{x})}}{\partial \tilde{x}^{(p)}}\right \}\right)=0.
	\end{align}
	Then taking
	$$\nabla f(\tilde{x})=\bigg((d_i\ln \frac{\tilde{x}_i}{\tilde{x}^*_i})^{n_0}_{i=1},
	(\ln \frac{{k}^{(p)}_{2}\tilde{x}_{n_0+p}}{\sum_{m\in \mathcal{I}'_p}{k}^{(p)}_{m,1} \tilde{x}^*_{p} \tilde{x}^{m-1}_{n_0+p}})^{\ell}_{p=1}\bigg)$$
	into \cref{eq:CBP-autoPDE}, we derive that
	\begin{align}\label{eq:CBPPDE}
	\sum^{r_0}_{i=1}
	\tilde{k}^{(0)}_{i} \tilde{x}^{(0)\tilde{v}_{\cdot i (0)}}
	\bigg(
	1-
	\exp \left\{(\tilde{v}'_{\cdot i(0)}-\tilde{v}_{\cdot i(0)})^\top \ln \frac{\tilde{x}^{(0)}}{\tilde{x}^{(0)^*}}\right \}
	\bigg)=0,
	\end{align}
	where the second equality holds on account of \cref{CBP-CRNstability}.
	
	Since $d_p=1$ for $p=1,\cdots,{\ell}$, we get $\zeta=\frac{\partial{f(\tilde{x})}}{\partial{\tilde{x}}^{(p)}}=\bigg(\ln\frac{\tilde{x}_{p}}{\tilde{x}^*_{p}},
	~\ln \frac{{k}^{(p)}_{2}\tilde{x}_{n_0+p}}{\sum_{m\in \mathcal{I}'_p}
		{k}^{(p)}_{m,1} \tilde{x}^*_{p} {x}^{m-1}_{n_0+p}}\bigg)$.
For any $p\in \{1,\cdots,{\ell}\}$, we have
\begin{align}\label{eq:autoPDE}
&\sum^{r_p}_{i=1}
    \bigg(
    \tilde{k}^{(p)}_{i} \tilde{x}^{(p)\tilde{v}_{\cdot i (p)}}-
	\tilde{k}^{(p)}_{i} \tilde{x}^{(p)\tilde{v}_{\cdot i (p)}}\exp \left\{(\tilde{v}'_{\cdot i(p)}-\tilde{v}_{\cdot i(p)})^\top \frac{\partial{f(\tilde{x})}}{\partial \tilde{x}^{(p)}}\right \} \bigg)\\
=&
\sum_{m\in \mathcal{I}'_p}k^{(p)}_{m,1} \tilde{x}_{p}\tilde{x}^{m-1}_{n_0+p}
\bigg(
1-\exp \left \{ (-1,1)^\top
\zeta
\right \}
\bigg)
+k^{(p)}_{2}\tilde{x}_{n_0+p}
\bigg(1-\exp \left\{
(1,-1)^\top
\zeta
\right\}
\bigg)\notag\\
=&
\sum_{m\in \mathcal{I}'_p}k^{(p)}_{m,1} \tilde{x}_{p}\tilde{x}^{m-1}_{n_0+p}
\bigg(
1-
\frac{k^{(p)}_{2}\tilde{x}_{n_0+p}}{\sum_{m\in \mathcal{I}'_p}
	k^{(p)}_{m,1} \tilde{x}_{p} \tilde{x}^{m-1}_{n_0+p}}
\bigg)
+
k^{(p)}_{2}\tilde{x}_{n_0+p}
\bigg(1-\frac{\sum_{m\in \mathcal{I}'_p}
	k^{(p)}_{m,1} \tilde{x}_{p} \tilde{x}^{m-1}_{n_0+p}}{k^{(p)}_{2}\tilde{x}_{n_0+p}}
\bigg)\notag\\
=&0 \notag,
\end{align}	
	
Therefore, by summing \cref{eq:autoPDE} from $p=1$ to ${\ell}$ and \cref{eq:CBPPDE} we get \cref{eq:CBP-autoPDE}.
\end{proof}

\begin{proof}[\textbf{The proof of \cref{thm:CBPauto}}]
	Firstly, we compute the second derivative of $f(\tilde{x})$ as
	\begin{equation}\label{eq:Hessian}
	\begin{array}{ll}
	\nabla^2 f(\tilde{x})=
	\left(
	\begin{array}{cc}
	\text{diag}(d_i \tilde{x}^{-1}_{i})^{n_0}_{i=1} & \mathbbold{0}_{{n_0}\times {\ell}} \\
	\mathbbold{0}_{{\ell}\times{n_0} }& \text{diag}\bigg(\frac{\sum_{m\in \mathcal{I}'_p}(2-m){k}^{(p)}_{m,1}
		\tilde{x}^{m-1}_{n_0+p}}{\sum_{m\in \mathcal{I}'_p}{k}^{(p)}_{m,1}\tilde{x}^m_{n_0+p}}\bigg)^{\ell}_{p=1}
	\end{array}
	\right).
	\end{array}
	\end{equation}
	Clearly, $\forall \{p\}^{\ell}_{p=1}$, when $\tau_p\leq2$, which means $m\leq2$, thus $f{(\tilde{x})}$ is strictly convex for $\nabla^2 f(\tilde{x}) > 0 $ in $\tilde{\mathscr{S}}^+(\tilde{x}_0)$ with respect to the initial value $\tilde{x}_0$ near $\tilde{x}^*$. Associated with \cref{asymstability}, $\tilde{x}^*$ is locally asymptotically stable.
	
	Further, we continue to prove the second result. The continuity of the function $\sum_{m\in \mathcal{I}'_p}(2-m)k^{(p)}_{m,1}
	\tilde{x}^{m-1}_{n_0+p}$ with respect to $\tilde{x}_{n_0+p}$ implies that, there exists a neighborhood of $\tilde{x}^*_{n_0+p}$ for $p=1,\cdots,\ell$, denoted by $\mathcal{N}(\tilde{x}^*_{n_0+p})$, such that $\forall \tilde{x}_{n_0+p} \in\mathcal{N}(\tilde{x}^*_{n_0+p})$, it holds
	$$\sum_{m\in \mathcal{I}'_p}(2-m)k^{(p)}_{m,1}
	\tilde{x}^{m-1}_{n_0+p}>0.$$
	Therefore,
	$\forall \tilde{x} \in \bigg(\mathbb{R}^{n_0}_{>0}\bigotimes^{\ell}_{p=1}\mathcal{N}(\tilde{x}^*_{n_0+p})\bigg)\bigcap \tilde{\mathscr{S}}^+(\tilde{x}^*)\bigcap( \bigcup_ {p\in W_{\mathcal{I}'_p}}\mathscr{M}_p)$, where $\mathscr{M}_p,W_{\mathcal{I}'_p}$ share the same meanings as the ones given in the proof of \cref{le:CBP-autoequilibriumunique}, we have $\nabla^2 f(\tilde{x}) >0 $. In the end, \cref{asymstability} tells us that $\tilde{x}^*$ is locally asymptotically stable.
\end{proof}

\section*{Acknowledgments}
We would like to acknowledge Arjan van der Schaft for his valuable comments.

\bibliographystyle{siamplain}

\end{document}